\newtheorem{theorem}{Theorem}
\newtheorem{proposition}{Proposition}
\theoremstyle{definition}
\newtheorem{example}{Example}[section]
\theoremstyle{remark}
\newtheorem{remark}{Remark}
\DeclareMathOperator{\diag}{diag}
\DeclareMathOperator{\Ker}{Ker}
\begin{document}
\title{\textbf{Long term dynamics of the discrete growth-decay-fragmentation equation}}
\author[a]{J. Banasiak}
\author[*]{L.O. Joel}
\author[*]{S. Shindin}
\affil[a]{ Department of Mathematics and Applied Mathematics, University of
Pretoria, South Africa \& Institute of Mathematics, Technical University of \L\'{o}d\'{z}, \L\'{o}d\'{z}, Poland}
\affil[*]{School of Mathematics, Statistics and Computer Science\\
           University of Kwazulu-Natal, Durban,
           South Africa}

\date{}
\maketitle

\pagestyle{myheadings}
\markboth{J. Banasiak, L.O. Joel and S. Shindin}{Discrete growth-decay-fragmentation equation}
\thispagestyle{empty}
\begin{abstract}
In this paper, we prove that for a large class of growth-decay-fragmentation problems the solution semigroup is analytic and compact and thus has the Asynchronous Exponential Growth property.
\end{abstract}

\section{Introduction}\label{sec1}

Coagulation and fragmentation models that describe the processes of objects forming larger clusters or, conversely,
splitting into smaller fragments,  have received a lot of attention over several decades due to their importance in
chemical engineering and other fields of science and technology, see e.g. \cite{drake, ZM1}. One of the most
efficient approaches to modelling   dynamics of such processes is through the kinetic (rate)  equation which describes
the evolution of the distribution of interacting clusters with respect to their size/mass. The first model of this kind,
consisting of an infinite system of ordinary differential equations, was derived by Smoluchowski,
\cite{Smoluch, Smoluch17}, to describe  pure coagulation in the discrete case; that is,
if the ratio of the mass of the basic building block (monomer) to the mass of a typical cluster is positive,
and thus the size of a cluster is a finite multiple of the masses of the monomers. In many applications, however,
it turned out to be advantageous  to allow clusters to be composed of particles of any size $x>0$. This leads
to the continuous integro-differential equation  that was derived  by M\"{u}ller in the pure coagulation case,
\cite{muller1928allgemeinen}, and extended to a coagulation--fragmentation version in \cite{Melz53}.

In the last few decades it has been observed that also living organisms form clusters or split into subgroups depending on
circumstances, see e.g. \cite{gueron1995dynamics, Oku, okubo1986dynamical, DLP17} for modelling concerning larger
animals, or \cite{jackson1990model, Ackleh1} for phytoplankton models. It turns out that also the process of cell division
may be modelled within the same framework, see e.g. \cite{bell1967cell, sinko1971model, PerTr, BaPiRu}. What was not
always fully recognized in some papers mentioned above was that the living matter has its own vital dynamics; that is, in
addition to forming or breaking clusters, individuals within them are born or die  and so the latter processes must be
adequately represented in the models. In the continuous case, the birth and death processes are incorporated into the model
by adding an appropriate first order transport term, analogously to the age or size structured McKendrick model, see \cite{Ackleh1,
BaLa09, Bana12a, BaPiRu, PerTr}.  On the other hand, in the discrete case the vital processes are modelled by adding the
classical birth-and-death terms to the Smoluchowski equation. Note that e.g. the pure birth terms (or pure death terms) can
be obtained by the Euler discretization of the first order differential operator of the continuous case while the full birth-and-
death problem can be thought of as the discretization of the diffusion operator.

One of the most important problems in the analysis of dynamical systems is to determine their long term behaviour and
hence this aspect of the theory of growth--fragmentation equations has received much attention. The first systematic
mathematical study of the binary cell division model was carried out using semigroup theory in \cite{diekmann1984stability};
the semigroup approach was significantly extended to more general models in \cite{MiSc16}. Recently a number of results
have been obtained by the General Relative Entropy (or related) methods that lead to convergence of solutions in spaces
weighted by the eigenvector of the adjoint problem, see e.g. \cite{DoEs16, DoGa10, banasiakanalysis, LaPe09, MMP05,
PerTr, PeRy05}.

All above results concern growth--fragmentation models with continuous size distribution. Recently it has been observed,
however, that a large class of discrete fragmentation equations has much better properties than their continuous counterparts,
especially when considered in spaces where sufficiently high moments of solutions are finite. In particular, the fragmentation
operator in such spaces generates compact analytic semigroups. In this paper, we explore these ideas for the full growth-
death-fragmentation equation and show, in particular, that under natural assumptions on the coefficients of the problem the
growth-death-fragmentation semigroup is analytic, compact and irreducible and thus has the Asynchronous Exponential
Growth (AEG) property, see \cite{Arino}.

\section{The model}\label{sec2}
We consider a collection of clusters of sizes $n \in \mathbb N;$ that is, consisting of $n$ monomers
(cells, individuals,\ldots)  described by their size specific density $f=(f_n)_{n\in\mathbb{N}}$.
We assume that the number of monomers in each cluster can change by, say, a cell division (with the daughter cell
staying in the cluster) or its death. In an inanimate scenario, this can happen by the deposition of a particle from
the solute or, conversely, by its dissolution. If we assume that the probability of more than one  birth or death event in
a cluster happening simultaneously is negligible, then the process can be modelled by the classical birth-and-death system
of equations, see e.g. \cite[p. 1199]{jackson1990model}. We note that in the case of continuous size distribution the  growth process is modelled by the first
order differential operator with respect to size, $f \to -\partial_x(g f)$, where $g$ is the growth rate, see e.g. \cite{Bana12a},
whose Euler discretization with step-size 1 at $x =n$ is $g(n)f(n) - g(n+1)f(n+1).$ Similarly, the decay operator
$f \to \partial_x(d f)$ can be discretized as $-d(n)f(n) + d(n+1)f(n+1)$ and, using a central difference scheme,
the diffusion operator $f\to \partial_x(D\partial_xf)$ yields $D(n+1)f(n+1) - (D(n+1)+D(n))f(n) + D(n)f(n-1)$.

We further assume that the clusters can split into several smaller clusters. Combining both processes,
we arrive at the following system of equations:
\begin{align}
\frac{df_1}{dt}&= -g_1f_1 + d_2 f_2 + \sum\limits_{i=2}^\infty a_{i}b_{1,i}f_{i},\nonumber\\
\frac{df_n}{dt}&= g_{n-1}f_{n-1}-(a_n+g_n+d_n)f_n + d_{n+1}f_{n+1}
+ \sum\limits_{i=n+1}^\infty a_{i}b_{n,i}f_{i}, \;\; n\geq 2,\nonumber\\
f_n(0)&=f^{in}_n, \quad n\geq 1,
\label{feco1}
\end{align}
or
\begin{align}
\frac{df}{dt}&=\mathcal{G}^-f +(\mathcal{A}+\mathcal{G}^0+\mathcal{D}^0)f +\mathcal{D}^+f
+\Xi\mathcal{A}f= \mathcal{G}f+ \mathcal{D}f+ \mathcal{A}f+ \mathcal{B}f \nonumber\\
& = \mathcal{G}f+ \mathcal{D}f+ \mathcal{F}f\nonumber,\\
 f(0) &= f^{in}, \label{fecof}
\end{align}
where $f = (f_n)_{n\in\mathbb{N}}$ is the vector whose components $f_n$ give the numbers of $n$-clusters,
$\mathcal{A} = \diag(-a_n)_{n\geq 1}$, with $a_1=0$ and $a_n>0, n\geq 2,$  gives
the rates at which the clusters of mass $n$ undergo splitting,  $\mathcal{G}^0 = \diag(-g_n)_{n\geq 1}$,
$g_n\geq 0, n\geq 1,$ is the growth rate, $\mathcal{D}^0 = \diag(-d_n)_{n\geq 1}$,
$d_1=0$, $d_n\geq 0, n\geq 2,$ is the death rate, $\mathcal{G}^-, \mathcal{D}^+$ are, respectively, the
left and right shifts of $\mathcal{G}$ and $\mathcal{D}$; that is
\[
\mathcal{G}^-f = (0,g_1f_1,\ldots g_nf_n,\ldots),\quad \mathcal{D}^+f = (d_2f_2,\ldots d_nf_n,\ldots),
\]
$\Xi= (b_{n,i})_{1\le n < i,i\ge 2}$ is the daughter distribution function, also called the fragmentation
kernel, that gives the numbers of $i$-clusters resulting from splitting of a mass $n$ parent and
\[
\mathcal{G}=\mathcal{G}^-+\mathcal{G}^0, \quad
\mathcal{D} = \mathcal{D}^++\mathcal{D}^0, \quad
\mathcal{B} = \Xi\mathcal{A}, \quad
\mathcal{F} = \mathcal{A} +\mathcal{B}.
\]
Coefficients   $b_{n,i}$, $1\le n< i$, $i\ge 2$, are nonnegative numbers satisfying
\begin{equation}
\sum\limits_{n=1}^{i-1}nb_{n,i} = i.
\label{bin}
\end{equation}
The total mass of the ensemble is given by
\begin{equation}
M(t) = \sum\limits_{n=1}^\infty nf_n(t), \qquad t\geq 0;
\label{masscons1}
\end{equation}
then it is known, see e.g. \cite{Bana12b, BaLa12b}, that in the pure fragmentation case
($\mathcal{G}=\mathcal{D} =0$) the mass is conserved
\[
\frac{dM}{dt} = 0;
\]
that is,
\[
M(t) =  \sum\limits_{i=1}^\infty nf_n(0).
\]
Later, we shall use the fact that  (\ref{feco1}) can be written as the pure growth-fragmentation model
\begin{align}
\frac{df_1}{dt} &= -g_1f_1 + \sum\limits_{i=2}^\infty \mathsf{a}_{i} \mathsf{b}_{1,i}f_{i},\nonumber\\
\frac{df_n}{dt}&= g_{n-1}f_{n-1}-(g_n+\mathsf{a}_n)f_n + \sum\limits_{i=n+1}^\infty \mathsf{a}_{i}
\mathsf{b}_{n,i}f_{i},
\quad n\geq 2,\nonumber\\
f_n(0)&= f_n^{in}, \quad n\geq 1,
\label{feco2}
\end{align}
where $\mathsf{a}_n = a_n+d_n$, $n\geq 2,$ (with $\mathsf{a}_1=0$) and
\begin{equation}
\mathsf{b}_{n,i} = \left\{\begin{array}{lc}\frac{a_{n+1}b_{n,n+1} + d_{n+1}}{a_{n+1}+d_{n+1}},& i=n+1,\\
\frac{a_ib_{n,i}}{a_i+d_i},& i\geq n+2.
\end{array}\right.
\label{betani}
\end{equation}
We note that the fragmentation part of this model no longer is conservative as
\begin{equation}
\sum\limits_{n=1}^{i-1}n\mathsf{b}_{n,i} = i\left(1-\frac{d_i}{i(a_i+d_i)}\right), \qquad i\geq 2,
\label{betain}
\end{equation}
so it corresponds to the model with the so-called discrete mass-loss with mass-loss fraction
$\lambda_n = d_n/n(a_n+d_n)$, see \cite{Ed1,6}, mathematically analysed in \cite{SLLM12}.

The analysis of the pure fragmentation equation most often is carried out in the space $X_1:=\ell^1_1$ with the norm
 \begin{equation}
 \| f\|_{[1]} = \sum\limits_{n=1}^\infty n|f_n|
 \label{norm1}
 \end{equation}
 which, for a nonnegative $f,$ gives the mass of the ensemble. However,
 it is much better to consider (\ref{feco1}) in the  spaces with finite higher moments, $X_m:=\ell^1_m$, with the norm
 \begin{equation}
 \| f\|_{[m]} = \sum\limits_{n=1}^\infty n^m |f_n|, \qquad m\geq 1.
 \label{normp}
 \end{equation}
In the sequel, for any infinite diagonal matrix $\mathcal{P} = \diag(p_n)_{n\geq 1}$, we define the operator
$P_m$  in $X_m$ by $P_mf = \mathcal{P}f$ on $D(P_m) = \{f\in X_m;\; \mathcal P f\in X_m\}$.

\section{Analysis of the subdiagonal part}\label{sec3}
In this section, we shall consider the simplified problem corresponding to the subdiagonal part of (\ref{feco2}),
\begin{equation}
\frac{df}{dt} =\mathcal{K}f = \mathcal{G}^-f +(\mathcal{A}+\mathcal{G}^0+\mathcal{D}^0)f, \quad
f(0) = f^{in}. \label{feco3}
\end{equation}
Denote for brevity $\mathcal{T} = \mathcal{A}+\mathcal{G}^0+\mathcal{D}^0$ and consider the operator
$(T_m, D(T_m))$ defined, as above, by $T_m f = \mathcal{T}f$ on $D(T_m) = \{f \in X_m;\; \mathcal T f \in X_m\}$.
Then $G^-_m:= \mathcal{G}^-|_{D(T_m)}$ is a well defined positive operator in $X_m$ and we can apply
the substochastic semigroup theory, \cite{BaAr}, to $\mathcal{K}|_{D(T_m)} = T_m+G^-_m$.
Let $K_{m,\max}$ denote the maximal extension of $K_m$; that is, $K_{m,\max}f =
\mathcal{T}f + \mathcal{G}^-f$ on $$D(K_{m,\max}) =\{f \in X_m;\; \sum\limits_{n=2}^\infty n^m |a_nf_n +d_n f_n +g_n f_n-g_{n-1}f_{n-1}|<\infty\}.$$
\begin{theorem}
\begin{enumerate}
\item If
\begin{equation}
\liminf\limits_{n\to \infty} \left(a_n+d_n - g_n\frac{(n+1)^m-n^m}{n^{m}}\right) \geq 0,\label{condi1}
\end{equation}
then there is an extension ${K}_m$ of $T_m+G^-_m$ that generates a quasicontractive (of type $\mathcal{G}(1,\omega)$
for some $\omega\in \mathbb{R}$) positive semigroup on $X_m$ and, moreover, ${K}_m = K_{m,\max}$.
\item If there is $m'>m$ such that
\begin{equation}
\liminf\limits_{n\to \infty}\frac{n(a_n+d_n)}{g_n} \geq m',
\label{condi2}
\end{equation}
then (\ref{condi1}) is satisfied and the resolvent $R(\lambda, {K}_m)$ for $\lambda>\omega$ is given by
\begin{equation}
\label{lgf5}
[R(\lambda, {K}_m)f]_n = \sum_{i=1}^{n} \frac{f_i}{\lambda + \theta_n}
\prod_{j=i}^{n-1} \frac{g_{j}}{\lambda + \theta_{j}}, \quad n\ge 1,
\end{equation}
where $\theta_1 = g_1$ and $\theta_n = g_n+d_n+a_n, n\geq 1.$ Moreover, $D({K}_m)=D(A_m)\cap D(D^0_m)\cap D(G_m),$ where $G_m = \mathcal G|_{D(G_m)}$ with
\begin{equation}
D(G_m) =\{ f\in X_m;\; \sum\limits_{n=2}^\infty |g_{n}f_{n}-g_{n-1}f_{n-1}|<\infty\},
\label{dgm}
\end{equation}
and $({K}_m, D({K}_m)) = \overline{(T_m+G^-_m, D(T_m))}$.
\item If (\ref{condi2}) is satisfied, then $R(\lambda, {K}_m),$ $\lambda>\omega$, is compact provided
\begin{equation}
\liminf\limits_{n\to \infty}(a_n+d_n) = \infty
\label{condi3}
\end{equation}
and either
\begin{equation}
\sum\limits_{n=1}^\infty \frac{1}{g_n} <\infty, \quad \text{or}\quad
\lim\limits_{n\to \infty}\frac{n}{g_n} = 0.
\label{condi4}
\end{equation}
\item If
\begin{equation}
\liminf\limits_{n\to \infty}\frac{a_n+d_n}{g_n}> 0,
\label{riai}
\end{equation}
then $K_m =  G^-_m+T_m = G_m^-+G^0_m+D^0_m+ A_m$ and $(G_{K_m}(t))_{t\ge 0}$
is an analytic semigroup. If additionally (\ref{condi3}) is satisfied, then $(G_{K_m}(t))_{t\ge 0}$ is compact.
\end{enumerate}\label{mth1}
\end{theorem}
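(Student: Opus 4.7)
The proof splits into the four items of Theorem \ref{mth1}; the unifying theme is that the lower-bidiagonal structure of $\mathcal K$ turns the resolvent equation into an explicit two-step recursion, whose solution can be read off in closed form. I would prove the four items in order, each building on the previous.

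For item (1), I apply the Kato--Voigt perturbation theorem for substochastic semigroups (in the Banasiak--Arlotti form \cite{BaAr}): $T_m$ is a diagonal generator of a positive contraction semigroup on $X_m$ after a suitable shift, and $G_m^-$ is a positive $T_m$-bounded perturbation. For $f\ge 0$ in $D(T_m)$,
\[
\sum_{n\ge 1} n^m\bigl[(T_m+G_m^-)f\bigr]_n
= -\sum_{n\ge 1} n^m\Bigl[(a_n+d_n)-g_n\tfrac{(n+1)^m-n^m}{n^m}\Bigr]f_n,
\]
and (\ref{condi1}) bounds the right-hand side from above by $\omega\|f\|_{[m]}$. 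The Kato--Voigt machinery then produces the extension $K_m$ generating a positive $C_0$-semigroup of type $(1,\omega)$. The identification $K_m=K_{m,\max}$ amounts to verifying ``honesty'' in the sense of \cite{BaAr}: under (\ref{condi1}) no mass escapes at infinity, so the generator coincides with the formal maximal operator.

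For item (2), I first check (\ref{condi2})$\Rightarrow$(\ref{condi1}) via $(n+1)^m-n^m=mn^{m-1}+O(n^{m-2})$, which gives $g_n[(n+1)^m-n^m]/n^m\le (m+o(1))g_n/n$, whereas (\ref{condi2}) yields $a_n+d_n\ge(m'+o(1))g_n/n$; the liminf in (\ref{condi1}) is then $\ge (m'-m)\liminf g_n/n\ge 0$. Next, solving $(\lambda-K_{m,\max})f=h$ directly gives $f_1=h_1/(\lambda+\theta_1)$ and the recursion $f_n=(h_n+g_{n-1}f_{n-1})/(\lambda+\theta_n)$ which unfolds into (\ref{lgf5}); boundedness of this formula on $X_m$ for $\lambda>\omega$ combined with item (1) identifies it with $R(\lambda,K_m)$. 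The splitting $D(K_m)=D(A_m)\cap D(D^0_m)\cap D(G_m)$ is obtained by noting that the strict gap in (\ref{condi2}) prevents absolute cancellation among the three bidiagonal contributions to $K_mf$, and $K_m=\overline{T_m+G^-_m}$ follows from density of $D(T_m)\cap\ell^1_{m+1}$ in $D(K_m)$ in the graph norm, which is a standard core argument once the explicit resolvent is in hand.

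For item (3), I factor the resolvent as the Neumann series $R(\lambda,K_m)=R(\lambda,T_m)\sum_{k\ge 0}\bigl(G_m^-R(\lambda,T_m)\bigr)^k$. Condition (\ref{condi3}) makes $R(\lambda,T_m)=\diag(1/(\lambda+\theta_n))$ compact on $X_m$ as the norm-limit of its finite truncations. Condition (\ref{condi4}) supplies the necessary smallness of the weighted ratios $(n+1)^mg_n/(n^m(\lambda+\theta_n))$, each sub-case in turn, forcing $\|G_m^-R(\lambda,T_m)\|<1$ uniformly for $\lambda$ in a right half-plane. The Neumann series then converges in operator norm and compactness of $R(\lambda,K_m)$ follows by the ideal property, extended to every $\lambda$ in the resolvent set by the resolvent identity.

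For item (4), (\ref{riai}) gives $g_n/\theta_n\le 1/(1+c)<1$ for large $n$, so $G_m^-$ is $T_m$-bounded with relative bound strictly less than $1$; in particular $T_m+G_m^-$ is closed on $D(T_m)=D(A_m)\cap D(G^0_m)\cap D(D^0_m)$, giving the first displayed equality. Since $T_m$ is a negative real diagonal on $\ell^1_m$ it is sectorial of half-angle $\pi/2$, and the sectorial perturbation theorem for relatively bounded perturbations of bound strictly less than $1$ yields analyticity of $(G_{K_m}(t))_{t\ge 0}$. For the compactness claim, (\ref{riai}) alone already makes the Neumann series of item (3) converge in operator norm (the decay factor $g_n/(\lambda+\theta_n)\le 1/(1+c)$ is uniform for large $n$ and $\lambda$ in the right half-plane), so together with (\ref{condi3}) the resolvent is compact, and an analytic semigroup with compact resolvent is compact for every $t>0$. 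The main obstacle, as usual in this circle of ideas, sits in items (1)--(2): the honesty/maximality identification and the clean domain split require delicate bookkeeping of the telescoping $\sum n^m[(a_n+d_n+g_n)|f_n|-g_{n-1}|f_{n-1}|]$, and it is precisely the strict-margin case (\ref{condi2}) versus the borderline equality case (\ref{condi1}) that determines whether the three contributions can be separated in the graph-norm description of $D(K_m)$.
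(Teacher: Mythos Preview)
Your treatment of items (2) and (4) tracks the paper closely, and item (1) is almost right, but two points need correction.

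\medskip
\textbf{Item (1), minor.} The identification $K_m=K_{m,\max}$ is \emph{not} a consequence of honesty. Honesty in the sense of \cite{BaAr} yields $K_m=\overline{T_m+G_m^-}$, and the paper establishes that only in item (2), under the stronger hypothesis (\ref{condi2}). Under (\ref{condi1}) alone the paper instead exploits the lower-bidiagonal structure directly: the homogeneous equation $(\lambda I-K_{m,\max})f=0$ reads $(\lambda+\theta_1)f_1=0$ and $(\lambda+\theta_n)f_n=g_{n-1}f_{n-1}$, which forces $f\equiv 0$, so $\Ker(\lambda I-K_{m,\max})=\{0\}$. Since $K_m\subset K_{m,\max}$ and $\lambda I-K_m$ is bijective for $\lambda>\omega$, equality follows from \cite[Lemma~3.50 \& Proposition~3.52]{BaAr}.

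\medskip
\textbf{Item (3), substantive gap.} Your Neumann-series route fails here. The operator norm of $G_m^-R(\lambda,T_m)$ on $X_m$ equals
\[
\sup_{n\ge 1}\frac{(n+1)^m g_n}{n^m(\lambda+\theta_n)},
\]
and condition (\ref{condi4}) says $g_n$ is \emph{large} (either $\sum g_n^{-1}<\infty$ or $n/g_n\to 0$), which drives $g_n/(\lambda+\theta_n)$ \emph{towards} $1$, not away from it. Concretely, take $g_n=n^2$, $a_n+d_n=3n$, $m=2$: then (\ref{condi2}) holds with $m'=3$, (\ref{condi3}) and the first alternative of (\ref{condi4}) hold, yet
\[
\frac{(n+1)^2\,g_n}{n^2(\lambda+\theta_n)}=\frac{(n+1)^2}{\lambda+n^2+3n}\longrightarrow 1\qquad(n\to\infty),
\]
so $\|G_m^-R(\lambda,T_m)\|=1$ for every $\lambda>0$ and the Neumann series does not converge in operator norm. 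Strong convergence (which is all the substochastic theory gives) is not enough to transfer compactness.

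The paper therefore does \emph{not} factor through $R(\lambda,T_m)$ in item (3). It works directly with the explicit formula (\ref{lgf5}) and shows that the finite-rank truncations $P_{N-1}R(\lambda,K_m)$ converge in operator norm by splitting the tail $\sum_{n\ge N}$ into a near-diagonal part (source indices $i\ge N$), controlled by (\ref{condi3}) via $\sup_{n>N}\mathsf a_n^{-1}\to 0$, and a far-from-diagonal part (source indices $i<N$), controlled by (\ref{condi4}). In the latter, the summability or decay of $g_n^{-1}$ enters through the bound
\[
\sum_{n\ge N}\frac{1}{g_n}\frac{\Gamma(n+m)}{\Gamma(n)}\prod_{j=i}^{n}\frac{g_j}{\lambda+\theta_j},
\]
i.e.\ it controls an infinite tail of products, not the single-step ratio. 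Your Neumann-series shortcut becomes available precisely under the stronger hypothesis (\ref{riai}) of item (4), where $g_n/\theta_n\le 1/(1+c)<1$ uniformly; the paper uses it there and remarks explicitly that this simplifies the compactness proof relative to item (3).
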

\begin{proof}
ad 1.) As in (\ref{feco2}), we denote $\mathsf{a}_n = a_n+d_n, n\geq 1$.
Let $\mathsf{a}_n- n^{-m}g_n((n+1)^m-n^m)\geq 0$ for $n\geq n_0$. Then for $f \in D(T_m)_+$ we have
\begin{align}
\sum\limits_{n=1}^\infty n^m[(T_m +G^{-}_m)f]_n =&
-\sum\limits_{n=2}^\infty n^m f_n \left(\mathsf{a}_n - g_n \frac{(n+1)^m-n^m}{n^m} \right)\nonumber \\
=&   -\left(\sum\limits_{n=2}^{n_0-1}+\sum\limits_{n=n_0}^{\infty}\right) n^m f_n \left(\mathsf{a}_n - g_n \frac{(n+1)^m-n^m}{n^m} \right)\nonumber\\
 =& c_0(f) - c_1(f),
\label{1est}
\end{align}
where $c_0$ is a bounded functional on $X_m$ and $c_1$ is nonnegative. Thus, as in  \cite[Proposition 9.29]{BaAr},
there is an extension ${K}_m\supset G_m^-+T_m$ generating a smallest quasicontractive (with the growth
rate $\omega$ not exceeding $\|c_0\|$) positive semigroup. By \cite[Theorem 6.20]{BaAr},
${K}_m\subset K_{m,\max}$. However, it is immediate that $\Ker (\lambda I - K_{m,\max})
= \{ 0\}$, hence \cite[Lemma 3.50 \& Proposition 3.52]{BaAr} gives ${K}_m= K_{m,\max}$.

\noindent
ad 2.) Since $c_1$ extends to $D({K}_m)_+$, for $f \in D({K}_m)_+$ we can write
\begin{align}
c_1(f) &= \lim\limits_{l \to \infty} \left(\sum\limits_{n=n_0}^{l} n^m f_n \left(\mathsf{a}_n - g_n \frac{(n+1)^m-n^m}{n^m} \right) -l^{m} g_l f_l\right)\nonumber\\
&= \sum\limits_{n=n_0}^{\infty} n^m f_n \left(\mathsf{a}_n - g_n \frac{(n+1)^m-n^m}{n^m} \right) -\lim\limits_{l \to \infty} l^{m}g_lf_l\label{rlfl}
\end{align}
and hence the last limit exists. Further, we have
\begin{align*}
c_1(f) &= \sum\limits_{n=n_0}^{\infty} n^m f_n \left(\mathsf{a}_n - g_n \frac{(n+1)^m-n^m}{n^m} \right) \\
&= \sum\limits_{n=n_0}^{\infty} n^m f_n\mathsf{a}_n \left(1 - \frac{g_n}{n \mathsf{a}_n}\left(m + O\left(\frac{1}{n}\right) \right)\right).
\end{align*}
If (\ref{condi2}) is satisfied, then (possibly adjusting $n_0$ from the previous part of the proof) for  $n\geq n_0$
\[
 1- \frac{g_n}{n \mathsf{a}_n}\left(m + O\left(\frac{1}{n}\right) \right) \geq  1-\frac{m}{m'} +
  \frac{g_n}{n\mathsf{a}_n}O\left(\frac{1}{n}\right) \geq c'>0
\]
on account of $m'>m$ and $g_n/n\mathsf{a}_n \leq 1/m'$. Since $c_1$ extends to $D({K}_m)_+$
by monotonic limits,
we argue as in \cite[Theorem 2.1]{Bana12b} that any $f\in D({K}_m)$ is summable with the weights
$(n^m\mathsf{a}_n)_{n\geq 1}$ and hence, by (\ref{condi2}), it is also summable with the weight $(n^{m-1}{g}_n)_{n\geq 1}$. Therefore, in particular,  $D({K}_m) \subset D(A_m)\cap D(D^0_m)$ and hence also $D({K}_m) \subset D(G_m)$ holds by the definition of $D(K_{\max,m})$. The converse inclusion is obvious. Further, from \eqref{rlfl} we know that $\lim_{l \to \infty} l^{m}g_lf_l$ exists, and thus it must be 0. Indeed, otherwise  $l^{m}g_lf_l > c$ for some $c>0$ and large $l$ contradicting the summability of  $(n^{m-1}{g}_n)_{n\geq 1}$. But then (\ref{rlfl}) implies that ${K_m}$ is honest, hence $({K}_m, D({K}_m)) = \overline{(T_m+G^-_m, D(T_m))}$ by \cite[Corollary 6.14]{BaAr}.

Let $\lambda>\omega$. We use the formula
\begin{equation}
R(\lambda, {K}_m) f = \sum\limits_{k=0}^\infty R(\lambda, T_m)[G_m^-R(\lambda, T_m)]^k  f,
\quad   f\in X_m,\quad \lambda >\omega_0,
\label{res1}
\end{equation}
\cite[Proposition 9.29]{BaAr}. Since $R(\lambda, T_m)$ is represented by the matrix
$\mathcal{R}(\lambda) = \diag\left (\frac{1}{\lambda+\theta_n}\right)_{n\geq 1}$, and
$G_m^-$ is represented by $\mathcal{G}^-$, we have
$\mathcal{R}(\lambda)[\mathcal{G}^-\mathcal{R}(\lambda)]^k
=  (\gamma^{(k)}_{ij})_{i,j \in \mathbb{N}}$, where
\[
\gamma^{(k)}_{ij} = \left\{
\begin{array}{lc}\frac{1}{\lambda+\theta_j}\prod\limits_{l =i}^{j-1}\frac{g_l}{\lambda +\theta_{l}},&
i=1,2,\ldots,\;\; j = i+k,\\
0,&\mathrm{otherwise}.
\end{array}\right.
\]
Since the convergence in $X_m$ implies the coordinate-wise convergence, we see that for each $n$ the component
$[R(\lambda, {K}_m)f]_n$ of the series (\ref{res1}) terminates after $n$ terms and hence the
resolvent is given by (\ref{lgf5}).

Though not strictly necessary, the estimates of the norm of the resolvent are instructive and used also further down. To simplify the calculations, instead of $\|\cdot\|_{[m]}$,  we employ the norm
$\|f\|_* := \sum_{n=1}^{\infty} \frac{\Gamma(n+m)}{\Gamma(n)} |f_n|$ that is equivalent to
$\|\cdot\|_{[m]}$ by virtue of the Stirling formula, e.g. \cite[formula 6.1.47]{AbrSte},
\begin{equation}
\frac{\Gamma(n+m)}{\Gamma(n)} = O(n^{m}).
\label{Stfor}
\end{equation}

Let $f \in X_m$ and $\lambda >\omega$. Then, changing the order of summation,
\begin{align}
\| R(\lambda, {K}_m)f \|_* & \le \sum_{i=1}^{\infty} |f_{i}| \sum_{n=i}^{\infty} \frac{\Gamma(n+m)}{\Gamma(n)}
\frac{1}{\lambda + \theta_n} \prod_{j=i}^{n-1} \frac{g_{j}}{\lambda + \theta_{j}}\nonumber\\
& =  \frac{1}{\lambda} \sum_{i=1}^{\infty} |f_{i}| \sum_{n=i}^{\infty} \frac{\Gamma(n+m)}{\Gamma(n)}
\bigg ( \frac{\lambda+g_n}{\lambda + \theta_n} - \frac{g_n}{\lambda + \theta_n} \bigg )
\prod_{j=i}^{n-1} \frac{g_j}{\lambda +\theta_j} \nonumber\\
&\leq \frac{1}{\lambda} \sum_{i=1}^{\infty} |f_{i}|  \sum_{n=i}^{\infty} \frac{\Gamma(n+m)}{\Gamma(n)}
\left( \prod_{j=i}^{n-1} \frac{g_j}{\lambda + \theta_{j}} - \prod_{j=i}^{n} \frac{g_j}{\lambda+ \theta_{j}}\right).
\label{eq14}
\end{align}
Now, we have
\begin{align}
&\sum_{n=i}^{\infty} \frac{\Gamma(n+m)}{\Gamma(n)}\left( \prod_{j=i}^{n-1}
\frac{g_j}{\lambda +\theta_{j}} - \prod_{j=i}^{n} \frac{g_j}{\lambda+ \theta_{j}}\right)\nonumber \\
&\qquad= \lim\limits_{N\to \infty}   \sum_{n=i}^{N} \frac{\Gamma(n+m)}{\Gamma(n)}\left( \prod_{j=i}^{n-1}
\frac{g_j}{\lambda + \theta_{j}} - \prod_{j=i}^{n} \frac{g_j}{\lambda+ \theta_{j}}\right)\nonumber \\
&\qquad =\frac{\Gamma(i+m)}{\Gamma(i)}\nonumber\\
&\qquad\phantom{xx} +\lim\limits_{N\to \infty}\left( \sum\limits_{n=i+1}^N  \frac{\Gamma(n+m)}{\Gamma(n)}
\prod_{j=i}^{n-1} \frac{g_j}{\lambda + \theta_{j}} - \sum_{n=i}^{N} \frac{\Gamma(n+m)}{\Gamma(n)}
\prod_{j=i}^{n} \frac{g_j}{\lambda+ \theta_{j}}\right)\nonumber\\
&\qquad =\frac{\Gamma(i+m)}{\Gamma(i)} +\lim\limits_{N\to \infty}\left( \sum\limits_{n=i+1}^N
\frac{\Gamma(n+m-1)}{\Gamma(n-1)} \prod_{j=i}^{n-1}
\frac{g_j}{\lambda + \theta_{j}} \right.\nonumber\\
&\qquad\phantom{xx}+\left. m \sum\limits_{n=i+1}^N  \frac{\Gamma(n+m-1)}{\Gamma(n)} \prod_{j=i}^{n-1}
\frac{g_j}{\lambda + \theta_{j}}- \sum_{n=i}^{N} \frac{\Gamma(n+m)}{\Gamma(n)}
\prod_{j=i}^{n} \frac{g_j}{\lambda+ \theta_{j}}\right) \nonumber\\
&\qquad=\frac{\Gamma(i+m)}{\Gamma(i)}\label{15}\\
&\qquad\phantom{xx} +\lim\limits_{N\to \infty}\left(  m \sum\limits_{n=i+1}^N  \frac{\Gamma(n+m-1)}{\Gamma(n)}
\prod_{j=i}^{n-1} \frac{g_j}{\lambda +\theta_{j}}- \frac{\Gamma(N+m)}{\Gamma(N)} \prod_{j=i}^{N}
\frac{g_j}{\lambda+ \theta_{j}}\right).\nonumber
\end{align}
Using (\ref{condi2}), for sufficiently large $j$ we have
\begin{equation}
\frac{g_j}{\lambda+ \theta_{j}} = \frac{1}{\frac{\lambda}{g_j}+ 1+ \frac{a_j+d_j}{g_j}} \leq \frac{j}{m' + j},
\label{prodest}
\end{equation}
hence
\begin{align*}
0&\leq \limsup\limits_{N\to \infty}\frac{\Gamma(N+m)}{\Gamma(N)} \prod_{j=i}^{N}
\frac{g_j}{\lambda+ \theta_{j}} \leq \limsup\limits_{N\to \infty}\frac{\Gamma(N+m)}{\Gamma(N)}
\frac{\Gamma(N+1)\Gamma(i+m')}{\Gamma(N+m'+1)\Gamma(i)}\\
&= \limsup\limits_{N\to \infty} \frac{N}{N+m'}\frac{\Gamma(N+m)\Gamma(i+m')}{\Gamma(N+m')\Gamma(i)} = 0
\end{align*}
on account of the Stirling formula, see (\ref{Stfor}). Thus, (\ref{15}) can be continued as
\begin{align}
& \frac{\Gamma(i+m)}{\Gamma(i)} + m \sum\limits_{n=i+1}^\infty \frac{\Gamma(n+m-1)}{\Gamma(n)}
\prod_{j=i}^{n-1} \frac{j}{j +m'} \nonumber\\
&\qquad\qquad= \frac{\Gamma(i+m)}{\Gamma(i)} + m \sum\limits_{n=i+1}^\infty
\frac{\Gamma(n+m-1)\Gamma(i+m')}{\Gamma(i)\Gamma (n+m')}\nonumber \\
&\qquad\qquad= \frac{\Gamma(i+m)}{\Gamma(i)} + m \sum\limits_{n=i+1}^\infty
\frac{\Gamma(n+m-1)}{\Gamma(i)\Gamma(n-i)}B(n-i,m'+i),
\label{17}
\end{align}
where $B$ is the Beta function. The sum above can be computed explicitly. Indeed,
using the integral representation for the Beta function, we obtain
\begin{align*}
&\sum_{n=i+1}^{\infty} \frac{\Gamma(n+m-1)}{\Gamma(i) \Gamma(n-i)}
\int_{0}^{1} (1-t)^{m' + i - 1} t^{n-i-1} dt \\
&\qquad\qquad= \frac{1}{\Gamma(i)} \int_{0}^{1}\bigg( (1-t)^{m' + i - 1}   \sum_{n=i+1}^{\infty}
\frac{\Gamma(n+m-1)}{\Gamma(n)} \frac{d^{i}}{dt^{i}} t^{n-1} \bigg) dt  \\
&\qquad\qquad= \frac{1}{\Gamma(i)} \int_{0}^{1}\bigg( (1-t)^{m' + i - 1}  \frac{d^{i}}{dt^{i}}
\sum_{n=0}^{\infty}  \frac{\Gamma(n+m)}{\Gamma(n+1)} t^{n} \bigg) dt\\
&\qquad\qquad= \frac{\Gamma(m)}{\Gamma(i)} \int_{0}^{1} (1-t)^{m' + i - 1}
\bigg( \frac{d^{i}}{dt^{i}} \frac{1}{(1-t)^m} \bigg) dt\\
&\qquad\qquad=\frac{\Gamma(m+i)}{\Gamma(i)} \int_{0}^{1} (1-t)^{m' - m - 1}  dt
= \frac{\Gamma(m+i)}{\Gamma(i)}\frac{1 }{m' - m}.
\end{align*}
Substituting the above into (\ref{17}) and returning to (\ref{eq14}), we obtain
\begin{equation}
\|R(\lambda, {K}_m)f\|_* \leq \frac{m'}{m' - m}\frac{1}{\lambda} \|f\|_*.
\label{resest1}
\end{equation}
\noindent
ad 3.) To prove the compactness, we consider the projections
\begin{equation}
P_N f = (f_1,f_2,\ldots,f_N,0,\ldots), \quad N\geq 1.\label{proj}
\end{equation}
Since $P_NR(\lambda, {K}_m)$ is an operator with finite dimensional range, it is compact. We consider
\begin{align}
&\|P_{N-1}R(\lambda, {K}_m) f - R(\lambda, {K}_m) f\|_*
\leq\sum\limits_{n=N}^\infty \frac{\Gamma(n+m)}{\Gamma(n)} \sum\limits_{i=1}^{n-1}
\frac{|f_i|}{\lambda+\theta_n}\prod\limits_{j=i}^{n-1}\frac{g_j}{\lambda +\theta_j}\nonumber\\
&= \sum\limits_{i=1}^{N-1}|f_i|S_{N,i}+\sum\limits_{i=N}^{\infty}|f_i|S_{i+1,i},\label{SN}
\end{align}
where
\[
S_{l,i} = \sum\limits_{n=l}^\infty \frac{\Gamma(n+m)}{\Gamma(n)}
\frac{1}{\lambda+\theta_n}\prod\limits_{j=i}^{n-1}\frac{g_j}{\lambda+\theta_j}.
\]
Now,
\begin{align*}
S_{i+1,i} &= \sum\limits_{n=i+1}^\infty \frac{\Gamma(n+m)}{\Gamma(n)}
\frac{1}{\lambda+\theta_n}\prod\limits_{j=i}^{n-1}\frac{g_j}{\lambda+\theta_j}\\
&\leq  \sup\limits_{n\geq i+1}\left\{\frac{1}{\mathsf{a}_n}\right\}\sum\limits_{n=i}^\infty
\frac{\Gamma(n+m)}{\Gamma(n)} \frac{\mathsf{a}_n}{\lambda+\theta_n}
\prod\limits_{j=i}^{n-1}\frac{g_j}{\lambda+\theta_j}\\
&\leq  \sup\limits_{n\geq i+1}\left\{\frac{1}{\mathsf{a}_n}\right\}\sum\limits_{n=i}^\infty
\frac{\Gamma(n+m)}{\Gamma(n)} \left(1-\frac{g_n}{\lambda+\theta_n}\right)
\prod\limits_{j=i}^{n-1}\frac{g_j}{\lambda+\theta_j}\\
&=  \sup\limits_{n\geq i+1}\left\{\frac{1}{\mathsf{a}_n}\right\}\sum\limits_{n=i}^\infty
\frac{\Gamma(n+m)}{\Gamma(n)} \left(\prod\limits_{j=i}^{n-1}
\frac{g_j}{\lambda+\theta_j}-\prod\limits_{j=i}^{n}\frac{g_j}{\lambda+\theta_j}\right)\\
&\leq \sup\limits_{n\geq i+1}\left\{\frac{1}{\mathsf{a}_n}\right\}\frac{m'}{m'-m}\frac{\Gamma(i+m)}{\Gamma(i)},
\end{align*}
where we used the estimates for (\ref{eq14}). Hence
\begin{eqnarray}
\sum\limits_{i=N}^{\infty}|f_i|S_{i+1,i}&\leq& \sup\limits_{n\geq N+1}\left\{\frac{1}{\mathsf{a}_n}\right\}
\frac{m'}{m'-m}\sum\limits_{i=N}^{\infty}|f_i|\frac{\Gamma(i+m)}{\Gamma(i)}\nonumber\\
 &\leq& \sup\limits_{n\geq N+1}\left\{\frac{1}{\mathsf{a}_n}\right\}\frac{m'}{m'-m}\|f\|_*
\label{1ste}
\end{eqnarray}
and, by (\ref{condi3}), this term tends to 0 as $N\to \infty$, uniformly on the unit ball of $X_m$.

Since, by (\ref{prodest}),
\begin{align*}
\prod\limits_{j=i}^{n}\frac{g_j}{\lambda+\theta_j} &\leq  \frac{i}{m'+i}\cdot\ldots\cdot \frac{n}{m'+n}
\leq \frac{i}{m+i}\cdot\ldots\cdot \frac{n}{m+n} =\frac{\Gamma(n+1)\Gamma(i+m)}{\Gamma(i)\Gamma(n+m+1)}\\
&= \frac{\Gamma(n)\Gamma(i+m)}{\Gamma(i)\Gamma(n+m)}\frac{n}{n+m},
\end{align*}
we have
\begin{align*}
S_{N,i} &= \sum\limits_{n=N}^\infty \frac{\Gamma(n+m)}{\Gamma(n)}
\frac{1}{\lambda+\theta_n}\prod\limits_{j=i}^{n-1}\frac{g_j}{\lambda+\theta_j}
= \sum\limits_{n=N}^\infty \frac{1}{g_n}\frac{\Gamma(n+m)}{\Gamma(n)}
\prod\limits_{j=i}^{n}\frac{g_j}{\lambda+\theta_j}\\
&\leq \frac{\Gamma(i+m)}{\Gamma(i)} \sum\limits_{n=N}^\infty \frac{1}{g_n}\frac{n}{n+m}
\leq \frac{\Gamma(i+m)}{\Gamma(i)} \sum\limits_{n=N}^\infty \frac{1}{g_n}.
\end{align*}
Hence
\[
\sum\limits_{i=1}^{N-1}|f_i|S_{N,i} \leq \left(\sum\limits_{n=N}^\infty \frac{1}{g_n}\right)
\sum\limits_{i=1}^{\infty}|f_i|\frac{\Gamma(i+m)}{\Gamma(i)}
\]
and, using the first option of (\ref{condi4}) and combining the above estimate with (\ref{1ste}), we see that
\[
\lim\limits_{N\to \infty}P_{N-1}R(\lambda, {K}_m) =  R(\lambda, {K}_m)
\]
in the uniform operator norm. Therefore $R(\lambda, {K}_m)$ is compact.

To use the second option of (\ref{condi4}), first we re-write the formula for $S_{N,i}$ as
\begin{align*}
S_{N,i} &= \sum\limits_{n=N}^\infty \frac{\Gamma(n+m)}{\Gamma(n)}
\frac{1}{\lambda+\theta_n}\prod\limits_{j=i}^{n-1}\frac{g_j}{\lambda+\theta_j}
= \sum\limits_{n=N}^\infty \frac{1}{g_n}\frac{\Gamma(n+m)}{\Gamma(n)} \prod\limits_{j=i}^{n}
\frac{g_j}{\lambda+\theta_j}\\
&\leq \frac{\Gamma(i+m')}{\Gamma(i)} \sum\limits_{n=N}^\infty \frac{1}{g_n}
\frac{\Gamma(n+m)}{\Gamma(n+m')}\frac{n}{n+m'}.
\end{align*}
Then, using  again the Stirling formula, for large $i$ and $N>i$ we can write
\[
\frac{\Gamma(i+m')}{\Gamma(i)} \sum\limits_{n=N}^\infty \frac{1}{g_n}
\frac{\Gamma(n+m)}{\Gamma(n+m')}\frac{n}{n+m'}\leq C i^mN^{m'-m}
\sum\limits_{n=N}^\infty \frac{1}{n^{m'-m}g_n}
\]
for some constant $C$, since $m'-m>0$. Now, by assumption, $g^{-1}_nn^{m-m'}$ is summable
(as $g_n = O(n)$ and $m-m'<0$), hence $\sum\limits_{n=N}^\infty \frac{1}{n^{m'-m}g_n}$ converges to $0$
as $N\to \infty$. Since $N^{m-m'}$ monotonically converges to $0$, we can use the Stolz--Ces\'{a}ro theorem.
We have
\begin{align*}
\lim\limits_{N\to \infty} \frac{\frac{1}{g_N N^{m'-m}}}{N^{m-m'} - (N+1)^{m-m'}}
&=\frac{1}{m'-m}\lim\limits_{N\to \infty} \frac{N+1}{g_N} \\
&= \lim\limits_{N\to \infty} N^{m'-m} \sum\limits_{n=N}^\infty \frac{1}{n^{m'-m}g_n}.
\end{align*}
Since, by our assumption, the limit in the middle equals 0, we see that
\[
\sum\limits_{i=1}^{N-1}|f_i|S_{N,i} \leq \left(N^{m'-m}
\sum\limits_{n=N}^\infty \frac{1}{n^{m'-m}g_n}\right)\sum\limits_{i=1}^{\infty}|f_i|i^m
\]
and the thesis follows as above.

ad 4.) By (\ref{riai}), $g_n \leq C(a_n+d_n)$ for large $n$ and some $C>0,$ hence (\ref{condi2}) holds and
thus also the thesis of 2. holds. Moreover, (\ref{riai}) implies
\[
D(A_m)\cap D(D_m^0) \subset D(T_m)
\]
and hence, by 2.), $D({K}_m)\subset D(T_m)$. Since ${K}_m$ is an extension of
$(T_m+G^-_m, D(T_m))$, we see that $K_m = T_m+G^-_m$, but then we also have
$K_m = A_m+D^0_m+G^0_m+G^-_m$. Further, since $(T_m, D(T_m))$ is a diagonal operator, it generates an
analytic semigroup and hence $(K_m, D(T_m))$ also generates an analytic semigroup by the Arendt--Rhandi
theorem, \cite{AR}.

Now, the stronger assumption on $g_n$ allows for a simpler proof of the compactness without the need for (\ref{condi4}).
By virtue of the above and \cite[Theorem 4.3]{BaAr},  $I  - G_m^-R(\lambda, T_m)$ is invertible and
\[
R(\lambda, T_m+G^-_m) = R(\lambda, T_m) [I - G_m^-R(\lambda, T_m) ]^{-1}.
\]
In view of the last identity, it suffices to show that $R(\lambda, T_m)$ is compact for some $\lambda>0$.
For each $f\in X_m$ with $\| f\|_{[m]}\le 1$, we have $\|R(\lambda, T_m) f\|_{[m]}\le {1}/{\lambda}$ and
\[
\sum_{n = n_0}^\infty n^m\bigl|[R(\lambda,T_m)  f ]_{n}\bigr|
\le \sup\limits_{n\ge n_0} \frac{1}{\lambda +\theta_n} \sum_{n = n_0}^\infty n^m|f_n|
\le \sup\limits_{n\ge n_0} \frac{1}{\lambda +\mathsf{a}_n} \sum_{n = n_0}^\infty n^m|f_n|.\]
If \eqref{condi3} holds, we have
\[
\lim_{n_0\to\infty} \sup_{n\ge n_0} \frac{1}{\lambda +\mathsf{a}_n}
= \frac{1}{\lambda +\liminf_{n\to \infty}\mathsf{a}_n} = 0.
\]
Hence the image of the unit ball $B=\{ f\in X : \|f\|_{[m]} \le 1\}$ under $R(\lambda, T_m)$ is bounded and
uniformly summable  and therefore it is precompact, see \cite[IV.13.3]{DS}.
Hence $R(\lambda, T_m)$ is compact and the compactness of $(G_{K_m}(t))_{t\ge0}$ follows
from \cite[Theorem II.4.29]{EN}.
\end{proof}
\begin{remark} We note that (\ref{riai}) also allows to apply the Miyadera perturbation theorem, see e.g. \cite[Theorem III.3.16]{EN}. Indeed, if (\ref{riai}) is satisfied, then we can find $n_0$ such that for $n\geq n_0+1$
$$
\frac{(n+1)^mg_n}{n^m (g_n+\mathsf{a}_n)}\leq q<1
$$
and then $\omega>0$ such that
$$
\max\limits_{1\leq n\leq n_0}\frac{(n+1)^mg_n}{n^m (g_n+\mathsf{a}_n +\omega)}\leq q<1.
$$
Since the generation for $T_m+G^-_m$ is equivalent to that for  $T_m+G^-_m-\omega I,$ \cite[Lemma 4.15]{BaAr},  the Miyadera condition for $T_m+G^-_m-\omega I$ and $f \in D(T_m)_+$ reads
\begin{align*}
\int_0^\delta \|G^-_mG_{T_m}(t)f\|_{[m]}dt &= \sum\limits_{n=1}^\infty \frac{(n+1)^mg_{n}(1-e^{-(g_n+\mathsf{a}_n+\omega)\delta})}{n^m (g_n+\mathsf{a}_n +\omega)}n^m f_n\\
&\leq \sum\limits_{n=1}^{n_0} \frac{(n+1)^mg_{n}}{n^m (g_n+\mathsf{a}_n +\omega)}n^m f_n + \sum\limits_{n=n_0+1}^{\infty} \frac{(n+1)^mg_{n}}{n^m (g_n+\mathsf{a}_n)}n^m f_n\\
&\leq q\|f\|_{[m]}.
\end{align*}

At the same time, if $\mathsf{a}_n/g_n\to 0$ as $n\to \infty$, then $g_n/( \mathsf{a}_n+g_n) \to 1$ and the above estimate is not available.
\end{remark}
\section{Growth-fragmentation equation}\label{sec4}
We introduce the following notation, see \cite{Bana12b},
\begin{equation}
\Delta_n^{(m)} := n^m-b^{(m)}_n:=
n^m-\sum\limits_{k=1}^{n-1}k^mb_{k,n}, \quad
n\geq 2,\quad m\geq 0. \label{mpn}
\end{equation}
Then, for $n\geq 2,$
 \begin{equation} \Delta^{(0)}_n = 1-b^0_1\leq 0, \quad \Delta^{(1)}_n=0,
 \quad \Delta_n^{(m)} \geq0, \qquad m> 1. \label{Nm}
\end{equation}
Further, let us recall the notation $\theta_1=g_1$ and $\theta_n=a_n+g_n+d_n$, $n\geq 1$.
\begin{theorem}\label{theorem2}\begin{enumerate}
\item Let \eqref{condi2} be satisfied. If  for some $m>1$
 \begin{equation}
\liminf\limits_{n\to \infty}\frac{a_n}{\mathsf{a}_n}\frac{\Delta_n^{(m)}}{n^m}>\frac{m}{m'}
\label{crucrit'}
\end{equation}
holds, where  as before $\mathsf{a}_n =a_n+d_n$, then
\begin{align}
({Y}_m, D({K}_m)) &= ({K}_m + D_m^++ B_m, D({K}_m))\nonumber\\&= \overline{(T_m+G_m^-+D^+_m+B_m, D(T_m))}\label{thesame0}
\end{align}
 generates a positive semigroup in $X_m$. If additionally \eqref{condi3} and \eqref{condi4} are satisfied, $R(\lambda, {Y}_m)$ is compact for sufficiently large $\lambda$.
\item If  for some $m>1$
 \begin{equation}
\liminf\limits_{n\to \infty}\frac{a_n}{\theta_n}\frac{\Delta_n^{(m)}}{n^m}>0
\label{crucrit}
\end{equation}
holds, then
\begin{equation}
(U_m, D(T_m)) = (A_m+G_m+D_m + B_m, D(T_m)) = (Y_m, D(Y_m)),\label{thesame}
\end{equation}
where $D(T_m) = D(A_m+G_m^0+D_m^0),$ generates a positive, analytic semigroup in $X_k$ for any $k\geq m$.
\end{enumerate}
\end{theorem}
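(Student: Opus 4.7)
The plan is to regard Theorem \ref{theorem2} as a positive perturbation of the subdiagonal operator $K_m$ from Theorem \ref{mth1} by the superdiagonal $D_m^+$ together with the fragmentation gain $B_m$, and to reapply the substochastic perturbation machinery used in ad 1.)--ad 4.) of Theorem \ref{mth1}. The guiding computation, for $f\in D(K_m)_+$, is
\begin{align*}
\sum_{n=1}^\infty n^m [(K_m+D_m^++B_m)f]_n
={}&-\sum_n a_n\Delta_n^{(m)} f_n - \sum_n d_n\bigl(n^m-(n-1)^m\bigr) f_n\\
&+\sum_n g_n\bigl((n+1)^m-n^m\bigr) f_n,
\end{align*}
obtained from \eqref{1est} after substituting $\sum_n n^m d_{n+1}f_{n+1}=\sum_{n\ge 2}(n-1)^m d_n f_n$ and $\sum_n n^m\sum_{i>n}a_i b_{n,i}f_i=\sum_n a_n b_n^{(m)} f_n$, then regrouping via $n^m-b_n^{(m)}=\Delta_n^{(m)}$. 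Since $(n+1)^m-n^m=m n^{m-1}+O(n^{m-2})$ and \eqref{condi2} gives $g_n\le n\mathsf{a}_n/m'$ for large $n$, condition \eqref{crucrit'} yields $a_n\Delta_n^{(m)}-g_n\bigl((n+1)^m-n^m\bigr)\ge \varepsilon n^m\mathsf{a}_n$ for some $\varepsilon>0$ and all $n$ large. Dropping the nonpositive $d_n$-term, the whole expression is therefore bounded above by a bounded linear functional on $X_m$ minus a nonnegative tail, which is exactly the Kato--Voigt hypothesis of \cite[Proposition 9.29]{BaAr}.

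For part 1 the rest follows ad 1.)--ad 2.) of Theorem \ref{mth1}: a smallest quasicontractive positive extension $Y_m\supset K_m+D_m^++B_m$ is produced, the dual functional extends monotonically to $D(Y_m)_+$ as in \eqref{rlfl}, forcing $f\in D(Y_m)$ to be summable against $n^m\mathsf{a}_n$ and $(n-1)^m d_n$ (so $D(Y_m)\subset D(D_m^+)$) and forcing $\lim_l l^m g_l f_l=0$; this gives honesty and the closure identity \eqref{thesame0} via \cite[Corollary 6.14]{BaAr}. Compactness of $R(\lambda,Y_m)$ under \eqref{condi3}--\eqref{condi4} follows either from the Dyson--Phillips representation
\[
R(\lambda, Y_m) = R(\lambda, K_m)\bigl(I-(D_m^++B_m)R(\lambda, K_m)\bigr)^{-1}
\]
with the Neumann series converging for $\lambda$ large and each $R(\lambda,K_m)$ compact by Theorem \ref{mth1}(3), or by rerunning the projection estimate \eqref{SN}--\eqref{1ste} with the extra weights $(n-1)^m d_n$ and $a_n b_n^{(m)}$ inserted into the tail sums $S_{l,i}$, whose control by $\sup_{n\ge N}1/\mathsf{a}_n$ and $\sum_{n\ge N}1/g_n$ is unchanged.

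For part 2 the condition \eqref{crucrit} implies both \eqref{condi2} and \eqref{crucrit'}: from $a_n\Delta_n^{(m)}\ge c\theta_n n^m$ and $\Delta_n^{(m)}\le n^m$ we read $a_n\ge c\theta_n\ge cg_n$, hence $g_n=O(\mathsf{a}_n)$ and $n\mathsf{a}_n/g_n\to\infty$ gives \eqref{condi2} for any $m'$, while $\theta_n\ge\mathsf{a}_n$ turns \eqref{crucrit} into \eqref{crucrit'} once $m'$ is taken large enough to make $m/m'$ smaller than the liminf in \eqref{crucrit}. So part 1 applies, and the sharper denominator $\theta_n$ in the mass estimate now forces $f\in D(Y_m)$ to be summable against $n^m\theta_n$, whence $f\in D(A_m)\cap D(G_m^0)\cap D(D_m^0)=D(T_m)$; the reverse inclusion is immediate, giving $D(U_m)=D(T_m)$ and the identity \eqref{thesame}. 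Analyticity then follows from the Arendt--Rhandi theorem \cite{AR} applied to the diagonal analytic generator $T_m$ perturbed by the positive operator $G_m^-+D_m^++B_m$, exactly as in ad 4.) of Theorem \ref{mth1}; extension to each $X_k$, $k\ge m$, is automatic since $\Delta_n^{(k)}/n^k\ge\Delta_n^{(m)}/n^m$ (because $l<n$ makes $(l/n)^k\le(l/n)^m$ inside $b_n^{(k)}=\sum l^k b_{l,n}$), so \eqref{crucrit} propagates upward and the entire argument repeats verbatim. The delicate point I expect is exactly the upgrade $D(Y_m)=D(T_m)$: part 1 alone only separates $n^m\mathsf{a}_n$-summability, and isolating $D(G_m^0)$ from the combined $D(G_m)$ produced in Theorem \ref{mth1}(2) requires genuinely using $\theta_n$ in the denominator, which is the single structural difference between \eqref{crucrit} and \eqref{crucrit'}.
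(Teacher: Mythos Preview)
Your proposal is essentially correct and follows the same strategy as the paper: the key mass computation producing a negative functional $-\sum \Lambda_n\mathsf a_n n^m f_n$ (resp.\ $-\sum\Theta_n\theta_n n^m f_n$), the substochastic perturbation machinery for generation, the resolvent factorisation $R(\lambda,Y_m)=R(\lambda,K_m)[I-(D_m^++B_m)R(\lambda,K_m)]^{-1}$ for compactness, and Arendt--Rhandi for analyticity are all exactly what the paper does.

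Two points of comparison are worth making. First, for the second equality in \eqref{thesame0} you invoke honesty and \cite[Corollary~6.14]{BaAr}, but since you are perturbing $K_m$ (not $T_m$), honesty only yields $Y_m=\overline{K_m+D_m^++B_m}$ on $D(K_m)$, i.e.\ the \emph{first} equality; the identification with the closure from $D(T_m)$ still needs an extra step. The paper handles this by observing that $D_m^++B_m$ is $K_m$-bounded (because $D(K_m)\subset D(A_m)\cap D(D_m^0)=D(D_m^+)\cap D(B_m)$) and then passing to limits along $D(T_m)\ni f_n\to f\in D(K_m)$ using $(K_m,D(K_m))=\overline{(T_m+G_m^-,D(T_m))}$ from Theorem~\ref{mth1}. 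This is easy to insert into your argument.

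Second, for part~2 the paper takes a slightly more direct route: rather than first deducing \eqref{condi2} and \eqref{crucrit'} from \eqref{crucrit} and invoking part~1, it computes the mass functional directly on $D(T_m)$, obtains $\Theta_n\ge c>0$, and applies Arendt--Rhandi to the diagonal generator $T_m$ perturbed by $G_m^-+D_m^++B_m$; the identification $U_m=Y_m$ then drops out by closedness. Your detour through part~1 works too (and your observation that \eqref{crucrit} forces $g_n=O(a_n)$, hence \eqref{condi2} for every $m'$, is correct), but the paper's route avoids having to separately upgrade $n^m\mathsf a_n$-summability to $n^m\theta_n$-summability. Your monotonicity argument for propagating \eqref{crucrit} to $k\ge m$ via $(l/n)^k\le(l/n)^m$ is more elementary than the paper's citation of \cite[Theorem~2.1]{Bana12b} and is a nice touch.
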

\begin{proof}
ad 1.) Repeating the calculations in (\ref{1est}) for the
full operator using $f\in D({K}_m)_+  \subset D(A_m)_+\cap  D(D^0_m)_+$ and \eqref{dgm}, we obtain
$$
\sum\limits_{n=1}^\infty n^m[({K}_m+D^+_m + B_m)f]_n = \sum\limits_{n=1}^\infty n^m[A_m+D_m + B_m)f]_n  + \sum\limits_{n=1}^\infty n^m[G_m f]_n
$$
Now, using the convention that $g_0f_0=0$,
\begin{align*}
\sum\limits_{n=1}^\infty n^m[G_m f]_n &= \lim\limits_{l\to \infty}\sum\limits_{n=1}^l n^m(g_{n-1}f_{n-1}-g_{n}f_{n}) \\
&= \lim\limits_{l\to \infty}(\sum\limits_{n=1}^l ((n+1)^m-n^m)g_{n}f_{n} - (l+1)^m g_{l+1}f_{l+1}) \\&=  \sum\limits_{n=1}^\infty ((n+1)^m-n^m)g_{n}f_{n}
\end{align*}
by the proof of Theorem \ref{mth1}, part 2. Hence
\begin{align}
&\sum\limits_{n=1}^\infty n^m[({K}_m+D^+_m + B_m)f]_n = \sum\limits_{n=1}^\infty
\mathsf{a}_n n^m f_n\left(\left(\left(1+\frac{1}{n}\right)^m-1\right)\frac{g_n}{\mathsf{a}_n}\right.\nonumber\\
&\qquad\qquad\left.-\frac{a_n}{\mathsf{a}_n} \left(1 - \frac{1}{n^m}\sum\limits_{k=1}^{n-1}k^mb_{k,n}\right)
-\frac{d_n}{\mathsf{a}_n}\left(1-\left(1-\frac{1}{n}\right)^m\right)\right)\nonumber\\
&\qquad\qquad=:- \sum\limits_{n=1}^\infty \Lambda_n \mathsf{a}_nn^m f_n.
\label{est2}
\end{align}
Thus, if $\Lambda_n \geq 0$ for large $n$, then there is an extension $({Y}_m, D({Y}_m))$ of $({K}_m +D_m^++B_m, D(A_m)\cap D(D^0_m)\cap D(G_m))$ generating a positive semigroup. Since
\[
\Lambda_n = \frac{a_n}{\mathsf{a}_n}\frac{\Delta_n^{(m)}}{n^m}+\frac{d_n}{\mathsf{a}_n}O\left(\frac{1}{n}\right)
-\frac{g_n}{n\mathsf{a}_n}\left(m +O\left(\frac{1}{n}\right)\right),
\]
where   both $\frac{d_n}{\mathsf{a}_n}O\left(\frac{1}{n}\right)$ and $\frac{g_n}{n\mathsf{a}_n}O\left(\frac{1}{n}\right)$
converge to zero due to the boundedness of $d_n/\mathsf{a}_n$ and $\frac{g_n}{n\mathsf{a}_n} \leq  m/m'$, $\Lambda_n \geq \Lambda> 0$ for some $\Lambda$ and  large $n$ if and only if  (\ref{crucrit'}) is satisfied. We observe that $\Lambda\leq 1$ so that $m<m'$ is a necessary condition for (\ref{crucrit'}) to hold. Hence, if (\ref{crucrit'}) holds, $D({Y}_m) \subset D(A_m)\cap D(D_m^0)$. Then, since $D(B_m), D(D_m^+)\subset D(A_m)\cap D(D_m^0)$ and ${Y}_m$ is a restriction of the maximal operator, $D({Y}_m)\subset D(G_m)$ and hence the first part of (\ref{thesame0}) is proved. To prove the second part, we note that $(K_m, D(K_m)) = \overline{(T_m+ G_m^-, D(T_m))}$. Since $K_m + D^+_m + B_m$ is the generator, it is closed and thus
\begin{align}
\overline{(T_m+ G_m^-+D^+_m +B_m, D(T_m))}&\subset \overline{(K_m + D^+_m + B_m, D(K_m))}\nonumber \\& = (K_m + D^+_m + B_m, D(K_m)).\label{close}
\end{align}
On the other hand,  $D(K_m)\subset D(D_m^0)\cap D(A_m)=D(D_m^+)\cap D(B_m)$, hence  $D^+_m +B_m$ is $K_m$-bounded by \cite[Lemma 4.1 \& Theorem 2.65]{BaAr}.
Let $f \in D(K_m)$. Then $f = \lim_{n\to\infty} f_n$ with $f_n \in D(T_m)$ and $\lim_{n\to\infty}K_mf_n = \lim_{n\to\infty}(T_m + G^-_m)f_n = K_mf$. By $K_m$-boundedness, $((D^+_m + B_m)f_n)_{n\in \mathbb N}$ converges. By \eqref{close}, $T_m+ G_m^-+D^+_m +B_m$ is closable and hence $$
K_m f +D^+_mf +B_mf = \lim_{n\to \infty} (T_m+ G_m^-+D^+_m +B_m)f_n = \overline{(T_m+ G_m^-+D^+_m +B_m)}f.$$ Thus
$$
K_m  + D^+_m +B_m \subset \overline{T_m+ G_m^-+D^+_m +B_m}
$$
and (\ref{thesame0}) follows.

The compactness of $R(\lambda, {Y}_m)$ follows from
\[
R(\lambda, {Y}_m) = R(\lambda, {K}_m) [I - (B_m + D_m^+)R(\lambda, {K}_m) ]^{-1},
\]
where the second term on the right hand side is a bounded operator by $D(B_m+D^+_m) = D(A_m)\cap D(D^0_m)\supset D({K}_m).$ Thus the proof of the compactness of $R(\lambda, {Y}_m)$ follows as in item 4.) of Theorem \ref{mth1}.

ad 2.)  By \cite[Theorem 2.1]{Bana12b}, if (\ref{crucrit}) holds for some $m_0,$ then it holds for any $m\geq 0$.
Hence, we can fix an $m$ for which (\ref{crucrit}) holds. Then, for $f\in D(T_m) = D(A_m+G_m^0+D_m^0) = D(A_m)\cap D(G_m^0)\cap D(D^0_m)$, we obtain
\begin{align}
&\sum\limits_{n=1}^\infty n^m[(G_m+D_m+ A_m + B_m)f]_n =  \sum\limits_{n=1}^\infty
\theta_n n^m f_n\left(\left(\left(1+\frac{1}{n}\right)^m-1\right)\frac{g_n}{\theta_n}\right.\nonumber\\
&\qquad\qquad\left.-\frac{a_n}{\theta_n} \left(1 - \frac{1}{n^m}\sum\limits_{k=1}^{n-1}k^mb_{k,n}\right)
-\frac{d_n}{\theta_n}\left(1-\left(1-\frac{1}{n}\right)^m\right)\right)\nonumber\\
&\qquad\qquad=:- \sum\limits_{n=1}^\infty \theta_n n^m f_n\Theta_n.
\label{est2}
\end{align}
Then we proceed as above. Since
\[
\Theta_n = \frac{a_n}{\theta_n}\frac{\Delta_n^{(m)}}{n^m}+\frac{d_n}{\theta_n}O\left(\frac{1}{n}\right)
-\frac{g_n}{\theta_n}O\left(\frac{1}{n}\right),
\]
where the terms $\frac{g_n}{\theta_n}O\left(\frac{1}{n}\right)$ and  $\frac{d_n}{\theta_n}O\left(\frac{1}{n}\right)$
converge to zero due to the boundedness of $g_n/\theta_n$ and $d_n/\theta_n$,
$\Theta_n\geq c>0$ for large $n$ if and only if  (\ref{crucrit}) is satisfied. Hence
$(U_m, D(T_m)) :=(G_m+D_m+A_m+B_m ,D(A_m+G_m^0+D_m^0))$ generates an analytic semigroup as the positive perturbation of the diagonal operator $(T_m,D(T_m))$. However, by the closedness,
\begin{align*}
(A_m+G_m+D_m+B_m,D(A_m+G_m^0+D_m^0))& = (T_m+G_m^-+D_m^++B_m, D(T_m))\\
& = \overline{(T_m+G_m^-+D_m^++B_m, D(T_m))} \\&= (Y_m, D(Y_m)).
\end{align*}
\end{proof}
\begin{remark}
We note that (\ref{crucrit}) implies that both $\frac{a_n}{\theta_n}$ and $\frac{\Delta_n^{(m)}}{n^m}$ must be bounded away from 0 and thus, in particular, (\ref{riai}) is satisfied so that $(G_{K_m}(t))_{t\geq 0}$ is an analytic and compact semigroup in its own right.
\end{remark}
\section{Asynchronous exponential growth}\label{sec5}
\begin{proposition}
If assumptions (\ref{condi3}) and (\ref{crucrit}) hold, then $(G_{U_m}(t))_{t\ge0}$ is an analytic
and compact semigroup.
\end{proposition}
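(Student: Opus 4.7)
The plan is to split the claim into its two assertions. Analyticity is essentially free: condition (\ref{crucrit}) alone is the hypothesis of Theorem \ref{theorem2} part 2, which already furnishes a positive analytic semigroup generated by $(U_m, D(T_m))$ on $X_m$. So all the actual work lies in producing compactness, and the natural strategy is to combine Theorem \ref{mth1} part 4 (to get a compact resolvent for the subdiagonal part $K_m$) with the bounded perturbation representation of $R(\lambda, U_m)$ already used in the proof of Theorem \ref{theorem2} part 1.

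For compactness, I would first invoke the remark immediately preceding the proposition: (\ref{crucrit}) forces $a_n/\theta_n$ and $\Delta_n^{(m)}/n^m$ to be bounded away from zero, which in particular yields $g_n = O(a_n+d_n)$, i.e., (\ref{riai}) holds. Combined with (\ref{condi3}), Theorem \ref{mth1} part 4 then says that $R(\lambda,K_m)$ is compact for $\lambda$ in the resolvent set, and $D(K_m)=D(T_m)\subset D(A_m)\cap D(D_m^0)$. Using (\ref{thesame}), we have $U_m = Y_m = K_m + D_m^+ + B_m$, and since $D(B_m)=D(A_m)$ and $D(D_m^+)=D(D_m^0)$, the perturbation $B_m + D_m^+$ is $K_m$-bounded. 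Hence, for $\lambda$ sufficiently large,
\[
R(\lambda, U_m) = R(\lambda, K_m)\bigl[I - (B_m + D_m^+)R(\lambda, K_m)\bigr]^{-1},
\]
exactly as in item 4 of Theorem \ref{mth1} and the last step of the proof of Theorem \ref{theorem2} part 1. The bracket is a bounded operator on $X_m$ and $R(\lambda, K_m)$ is compact, so $R(\lambda, U_m)$ is compact.

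Finally, an analytic semigroup whose generator has compact resolvent is automatically a compact semigroup by \cite[Theorem II.4.29]{EN}, which closes the argument.

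The only mildly delicate point I foresee is the invertibility of $I - (B_m + D_m^+) R(\lambda, K_m)$, but this is not a real obstacle: by $K_m$-boundedness of $B_m + D_m^+$ and the standard estimate $\|(B_m + D_m^+) R(\lambda, K_m)\|\to 0$ as $\lambda \to \infty$, the Neumann series converges for large $\lambda$. Thus the whole argument is a bookkeeping exercise that exploits the fact that (\ref{crucrit}) alone already subsumes (\ref{riai}), so one obtains compactness of $R(\lambda,K_m)$ from (\ref{condi3}) without invoking (\ref{condi4}) as was needed in Theorem \ref{theorem2} part 1.
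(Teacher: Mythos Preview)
Your proposal is correct and follows essentially the same strategy as the paper: factor $R(\lambda,U_m)$ as a compact resolvent composed with a bounded correction, then invoke analyticity together with \cite[Theorem II.4.29]{EN}. The only difference is the choice of base operator: the paper factors directly through the diagonal operator $T_m$, writing $U_m = T_m + (G_m^- + D_m^+ + B_m)$ and using the compactness of $R(\lambda,T_m)$ established in the proof of Theorem~\ref{mth1}.4, whereas you take one more step and factor through $K_m = T_m + G_m^-$. Both routes work, and yours is exactly the argument already sketched at the end of the proof of Theorem~\ref{theorem2}.1.

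One small caveat: your final paragraph's Neumann-series justification (``$\|(B_m+D_m^+)R(\lambda,K_m)\|\to 0$'') does not follow from mere $K_m$-boundedness; that would require relative bound zero. The correct argument, which you in fact already cited by pointing to item~4 of Theorem~\ref{mth1}, is that since $U_m$ is a generator with $D(U_m)=D(K_m)$, one has $(\lambda I - K_m)R(\lambda,U_m) = [I-(B_m+D_m^+)R(\lambda,K_m)]^{-1}$, and the left-hand side is everywhere defined and closed, hence bounded. So the Neumann-series remark is superfluous and can simply be dropped.
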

\begin{proof}
By (\ref{thesame}), $(G_{U_m}(t))_{t\ge0}$ can be considered to be generated as the perturbation
$U_m = T_m + (G_m^-+D^+_m+B_m)$ so, as in the proof of Theorem~\ref{mth1}.4,
$R(\lambda, T_m)X_m\subset D(U_m)$ implies
\[
R(\lambda, U_m) = R(\lambda, T_m)(I-(G_m^-+D^+_m+B_m)R(\lambda, T_m))^{-1},
\]
where, by
\[
(\lambda I-T_m)R(\lambda, U_m) = (I-(G_m^-+D^+_m+B_m)R(\lambda, T_m))^{-1},
\]
the operator $(I-(G_m^-+D^+_m+B_m)R(\lambda, T_m))^{-1}$ is bounded. Hence, $R(\lambda, U_m)$ is
compact, provided $R(\lambda, T_m)$ is compact and that was proved in Theorem~\ref{mth1}.4.
Since $(G_{U_m}(t))_{t\ge0}$ is analytic, its compactness follows from \cite[Theorem II.4.29]{EN}.
\end{proof}

\begin{proposition}
The semigroup $(G_{U_m}(t))_{t\ge0}$ is irreducible.
\end{proposition}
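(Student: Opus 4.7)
The plan is to verify the standard characterisation of irreducibility in the $\ell^1$-setting: for $\lambda$ larger than the spectral bound of $U_m$ and for every basis vector $e_i$, every coordinate of $R(\lambda, U_m) e_i$ must be strictly positive. This is equivalent to saying that no closed ideal of $X_m$ other than $\{0\}$ and $X_m$ is invariant under $(G_{U_m}(t))_{t\ge 0}$.

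I would obtain strict positivity via a Neumann series. Using the decomposition $U_m = T_m + (G_m^- + D_m^+ + B_m)$ from Theorem~\ref{theorem2} (ad 2), with $T_m$ diagonal and $G_m^- + D_m^+ + B_m$ positive, the same computation as in the previous proposition gives
\[
R(\lambda, U_m) = R(\lambda, T_m)\sum_{k=0}^{\infty}\bigl[(G_m^- + D_m^+ + B_m)\,R(\lambda, T_m)\bigr]^{k}
\]
for all sufficiently large $\lambda$. Every term in this series is entrywise nonnegative and $R(\lambda, T_m)$ has strictly positive diagonal, so it is enough to show that for each pair $(i,n)$ some power $P^{k}$ of $P := (G_m^- + D_m^+ + B_m)\,R(\lambda, T_m)$ has a strictly positive $(n,i)$-entry.

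The off-diagonal structure of $P$ is transparent because $R(\lambda, T_m)$ is diagonal: the only candidates for positive entries are $(i+1,i)$ with value $g_i/(\lambda+\theta_i)$, $(i-1,i)$ with value $d_i/(\lambda+\theta_i)$, and $(k,i)$ for $k<i$ with value $a_i b_{k,i}/(\lambda+\theta_i)$. The $(n,i)$-entry of $P^{k}$ is strictly positive exactly when there is a directed path of length $k$ from $i$ to $n$ in the transition graph on $\mathbb{N}$ whose edges are given by these positive entries. Under the standing assumption $g_n>0$ for all $n\ge 1$, every state $i$ reaches every $n\ge i$ by iterated growth. For $n<i$, the mass-conservation identity $\sum_{k=1}^{i-1} k\, b_{k,i} = i > 0$ combined with $a_i>0$ (for $i\ge 2$) forces the existence of at least one $k<i$ with $a_i b_{k,i} > 0$; iterating this strict-decrease step, any $i\ge 2$ reaches $1$ in finitely many steps, and from $1$ growth takes us to any $n$. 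The graph is therefore strongly connected and irreducibility follows.

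The main obstacle, as always in this type of result, is ensuring that the transition graph really is strongly connected under the paper's blanket hypotheses; the positivity of $g_n$ for every $n\ge 1$ is the crucial point, since if $g_n=0$ for even a single $n$ the dynamics could not cross level $n$ from below via the other available moves (fragmentation and death only decrease the cluster size). One should therefore either assume $g_n>0$ throughout or supplement (\ref{crucrit}) with a structural condition on $b_{k,i}$ rich enough to compensate. The remainder of the argument, namely the convergence of the Neumann expansion and the nonnegativity of each matrix entry, is routine given Theorem~\ref{theorem2}.
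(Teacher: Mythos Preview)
Your argument is correct and takes a genuinely different route from the paper. The paper uses the decomposition $U_m = K_m + \mathsf{B}_m$ (with $\mathsf{B}_m = D_m^+ + B_m$ corresponding to the reformulated model \eqref{feco2}--\eqref{betani}), invokes the Neumann series \eqref{res1a} based on the explicit lower-triangular resolvent formula \eqref{lgf5} for $R(\lambda,K_m)$, and then shows that already the first two terms $R(\lambda,K_m) + R(\lambda,K_m)\mathsf{B}_m R(\lambda,K_m)$ applied to $0\neq f\ge 0$ cannot vanish in any coordinate, by deriving a contradiction with the mass identity \eqref{betain}. You instead use the coarser decomposition $U_m = T_m + (G_m^- + D_m^+ + B_m)$ with diagonal $T_m$, and reduce irreducibility to strong connectivity of the transition graph of $P = (G_m^- + D_m^+ + B_m)R(\lambda,T_m)$; this is the standard combinatorial picture and is arguably more transparent, while the paper's version exploits the explicit formula already established in Theorem~\ref{mth1}. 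Your concern about needing $g_n>0$ for all $n$ is legitimate, and the paper's own proof requires exactly the same hypothesis: the deduction ``$u_n=0$ forces $f_i=0$ and $\sum_j \mathsf{a}_j\mathsf{b}_{i,j}(\cdots)=0$ for all $i\le n$'' relies on the coefficients $\prod_{j=i}^{n-1} g_j/(\lambda+\theta_j)$ being strictly positive. This positivity is implicit throughout the paper (e.g.\ condition \eqref{condi2} involves $g_n$ in a denominator), so it is a tacit standing assumption rather than a gap on your side. One small point: the Neumann expansion you write down is better justified by citing \cite[Proposition~9.29]{BaAr} directly (as in \eqref{res1} and \eqref{res1a}) rather than by referring to the factored form in the previous proposition; in any case, for the graph argument you only need the inequality $R(\lambda,U_m)\ge R(\lambda,T_m)[PR(\lambda,T_m)]^k$ for each finite $k$, which follows immediately from positivity.
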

\begin{proof}
By \cite[Proposition 7.6]{Clem} it suffices to show that $R(\lambda, U_m)$ is irreducible for some $\lambda>s(U_m)$.
To simplify the calculations, we use the representation
\[
(U_m, D(T_m)) =  (K_m +D_m^++B_m, D(T_m))=: (K_m +\mathsf{B}_m, D(T_m)),
\]
see \eqref{thesame}, corresponding to \eqref{feco2} and \eqref{betani}.

Then, using the formula for the resolvent from \cite[Proposition 9.29]{BaAr} (compare \eqref{res1})
\begin{equation}
R(\lambda, U_m) f= \sum\limits_{k=0}^\infty R(\lambda, K_m)[{\sf B}_mR(\lambda, K_m)]^k  f, \quad
f\in X_m,\quad \lambda >s(U_m).
\label{res1a}
\end{equation}
we have
\[
R(\lambda, U_m) \geq R(\lambda, K_m) + R(\lambda, K_m){\sf B}_mR(\lambda, K_m)
\]
and, by \eqref{lgf5},
\begin{align*}
u_n&=\bigl([R_n(\lambda, K_m) + R(\lambda, K_m){\sf B}_mR(\lambda, K_m)]f\bigr)_n\\
&=\sum\limits_{i=1}^n\frac{1}{\lambda+\theta_i}
\prod\limits_{j=i}^{n-1}\frac{g_j}{\lambda+ \theta_{j}}\Biggl(f_i+\\
&\phantom{xxx}\left. \sum\limits_{j=i+1}^{\infty}\mathsf{a}_j\mathsf{b}_{i,j}\left(\sum\limits_{s=1}^j
\frac{f_s}{\lambda+\theta_s} \prod\limits_{l=s}^{j-1}\frac{g_l}{\lambda+ \theta_{l}}\right)\right).
\end{align*}
Hence $u_n =0$ if and only if
\begin{align*}
f_i =& 0, \\
\sum\limits_{j=i+1}^{\infty}\mathsf{a}_j\mathsf{b}_{i,j}\left(\sum\limits_{s=1}^j
\frac{f_s}{\lambda+\theta_s} \prod\limits_{l=s}^{j-1}\frac{g_l}{\lambda+ \theta_{l}}\right)=&0,
\end{align*}
for $1\leq i\leq n$. This implies
\[
\mathsf{b}_{i,j} = 0\quad \mathsf{for}\quad 1\leq i\leq n,\quad j\geq n+1.
\]
In particular,
\[
\mathsf{b}_{n,i+1}=0 \quad \text{for}\quad 1\leq n\leq i.
\]
This contradicts (\ref{betain}) that requires
\[
\sum\limits_{n=1}^{i}n\mathsf{b}_{n,i+1} = i+1-\frac{d_{i+1}}{a_{i+1}+d_{i+1}}>0, \qquad n\geq 1.
\]
Hence $R(\lambda, U_m)f>0$ provided $0\neq f \geq 0$ and thus $R(\lambda, U_m)$, and hence
$(G_{U_m}(t))_{t\ge0}$, are irreducible.
\end{proof}

Thus \cite[Theorem VI.3.5]{ENshort} yields the following result.
\begin{theorem}
Assume that  (\ref{condi3}) and \eqref{crucrit} are satisfied. Then there exist a strictly positive $ e \in X_m$,
a strictly positive $ h\in X_m^*, $ $M\geq 1$ and $\epsilon>0$ such that for any ${f}^{in}\in X_n$ and $t\geq 0$
\begin{equation}
\|e^{-s(U_m)t}G_{U_m}(t)f^{in} -\langle h, f^{in}\rangle e\|_{[m]}\leq Me^{-\epsilon t}.
\label{AEG}
\end{equation}\label{thAEG}
\end{theorem}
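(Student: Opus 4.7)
The plan is to apply the standard spectral theory of positive, irreducible, analytic, compact semigroups, which is exactly the content of the cited result \cite[Theorem VI.3.5]{ENshort}. The two preceding propositions have already supplied the non-trivial hypotheses, so the remaining work is largely bookkeeping: the semigroup $(G_{U_m}(t))_{t\ge 0}$ is positive (Theorem~\ref{theorem2}), analytic and compact (the first proposition of Section~\ref{sec5}), and irreducible (the second proposition).

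First I would observe that, because $R(\lambda,U_m)$ is compact, $\sigma(U_m)$ consists only of isolated eigenvalues of finite algebraic multiplicity. Combined with positivity and irreducibility, the Perron--Frobenius / Krein--Rutman theorem for $C_0$-semigroups then gives that $s(U_m)$ is itself an eigenvalue, that it is algebraically simple, and that the corresponding eigenspace is spanned by a strictly positive vector $e\in X_m$, with a strictly positive eigenfunctional $h\in X_m^{*}$ for $U_m^{*}$ at the same eigenvalue. Normalising $\langle h,e\rangle = 1$, the associated rank-one spectral projection is $Pf=\langle h,f\rangle e$, and $G_{U_m}(t)Pf^{in}=e^{s(U_m)t}\langle h,f^{in}\rangle e$.

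Second, I would extract the spectral gap. Because the semigroup is compact, the spectral mapping theorem applies to $(G_{U_m}(t))_{t\ge 0}$, so its growth bound on the complementary subspace $(I-P)X_m$ is determined by the spectrum of the restriction $U_m|_{(I-P)X_m}$. Irreducibility together with the algebraic simplicity of $s(U_m)$ ensures that $s(U_m)$ is strictly dominant in $\sigma(U_m)$, and since the rest of the spectrum is discrete and bounded above in real part, there exists $\epsilon>0$ such that
\begin{equation*}
\sup\{\mathrm{Re}\,\mu:\mu\in\sigma(U_m)\setminus\{s(U_m)\}\}\le s(U_m)-2\epsilon.
\end{equation*}
This supplies a constant $M\ge 1$ with $\|G_{U_m}(t)(I-P)\|\le M e^{(s(U_m)-\epsilon)t}$ for $t\ge 0$. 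Writing $f^{in}=Pf^{in}+(I-P)f^{in}$ and multiplying by $e^{-s(U_m)t}$ then yields (\ref{AEG}).

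Conceptually there is no obstacle here, since every nontrivial ingredient has been produced upstream; the only thing to keep straight is the chain of implications \emph{irreducibility + compact resolvent + positivity $\Rightarrow$ dominant simple peripheral eigenvalue}, and the use of analyticity/compactness to convert the spectral gap into a growth-bound gap via the spectral mapping theorem. The main delicate point — which is handled entirely inside the cited Engel--Nagel reference — is ensuring that the peripheral spectrum of the irreducible positive semigroup reduces to the single point $\{s(U_m)\}$; this is where irreducibility, rather than mere positivity, is essential.
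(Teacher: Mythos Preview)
Your proposal is correct and follows exactly the paper's approach: the paper gives no proof at all beyond the single line ``Thus \cite[Theorem VI.3.5]{ENshort} yields the following result,'' relying entirely on the two preceding propositions for compactness/analyticity and irreducibility. You have simply unpacked the content of that cited theorem, and your outline of the Perron--Frobenius/spectral-gap argument is accurate.
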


\section{Examples}\label{sec6}
\begin{example}\label{ex1}
To illustrate the above result, consider the growth-fragmentation problem
\begin{align}
\frac{df_1}{dt}&= -g_1f_1 + \sum\limits_{i=2}^\infty a_{i}b_{1,i}f_{i},\nonumber\\
\frac{df_n}{dt}&= g_{n-1}f_{n-1}-(a_n+g_n)f_n, \quad n\geq 2\nonumber\\
f_n(0)&= f^{in}_n, \quad n\geq 1,
\label{feco1ex}
\end{align}
where
\[
b_{n,i} = \left\{
\begin{array}{lcl}i&\mathrm{for}& n=1,\\
0&\text{otherwise};
\end{array}\right.
\]
that is, any particle breaks down into monomers. We see that
$$
\Delta_n^{(m)} = n^m-\sum\limits_{k=1}^{n-1}k^mb_{k,n} = n^m-n
$$
and hence (\ref{crucrit}) is satisfied for any $m>1$. Since $d_n=0$ for all $n$, we take any unbounded $(a_n)_{n\in\mathbb{N}}$ and $(g_n)_{n \in \mathbb N}$ satisfying
\begin{equation}
\gamma a_n \leq  g_n\leq g a_n, \quad n\geq 2
\label{angnle}
\end{equation}
for some $\gamma\leq g$. Then the semigroup $(G_{U_m}(t))_{t\ge0}$ that solves (\ref{feco1ex}) is analytic and compact in $X_m$
for any $m>1$ and Theorem \ref{thAEG} holds. Moreover, we observe that
\begin{align}
\lambda f_1&= -g_1f_1 +  \sum\limits_{i=2}^\infty a_{i}b_{1,i}f_{i},\nonumber\\
\lambda f_n&= g_{n-1}f_{n-1}-(a_n+g_n)f_n, \quad n\geq 2
\label{feco1ex0}
\end{align}
can be explicitly solved.  Indeed, let $\lambda\geq 0$ and, starting from the second equation,
we get
\[
f_{n,\lambda} = \frac{g_1 f_1}{\lambda + g_n+a_n}\prod\limits_{j=2}^{n-1} \frac{g_j}{\lambda+ g_j+a_j},
\quad n\geq 2
\]
and
\[
\sum\limits_{n=2}^\infty a_{n}b_{1,n}f_{n,\lambda} = g_1\sum\limits_{n=2}^\infty
\frac{a_n n}{\lambda+a_n+g_n}\prod\limits_{j=2}^{n-1} \frac{g_j}{\lambda+ g_j+a_j}.
\]
Now, by (\ref{riai}), $\frac{g_j}{\lambda+ g_j+a_j} \leq c = \frac{g}{1+g}< 1$ and thus,
\begin{equation}
g_1\sum\limits_{n=2}^\infty\frac{a_n n}{\lambda+a_n+g_n}\prod\limits_{j=2}^{n-1}
\frac{g_j}{\lambda+g_j+a_j}\leq g_1\sum\limits_{n=2}^\infty n c^{n-2} <\infty.
\label{unest}
\end{equation}
Hence, after dividing by $f_1\neq 0,$ the first equation  takes the form
\[
\psi(\lambda):=\frac{\lambda +g_1}{g_1} = \sum\limits_{n=2}^\infty\frac{a_n n}{\lambda + a_n+g_n}
\prod\limits_{j=2}^{n-1} \frac{g_j}{\lambda+ g_j+a_j}=:\phi(\lambda).
\]
By (\ref{unest}), the series defining $\phi$ is uniformly convergent on $[0,\infty),$ hence $\phi$ is continuous
there and
\[
\phi(0) = \sum\limits_{n=2}^\infty\frac{a_n n}{g_n}\prod\limits_{j=2}^{n} \frac{g_j}{g_j+a_j}.
\]
Using (\ref{angnle}), we have, for $q = \frac{\gamma}{1+\gamma}$,
\[
\phi(0) \geq \frac{1}{g}\sum\limits_{n=2}^\infty nq^{n-1} = \frac{1}{g} \frac{d}{d q}\sum\limits_{n=2}^\infty q^{n}
= \frac{1}{g}\frac{d}{dq}\frac{q^2}{1-q} = \frac{1}{g} \frac{2q-q^2}{(1-q)^2} = \frac{1}{g}\left(\frac{1}{(1-q)^2}-1\right);
\]
that is,
\[
\phi(0) \ge \frac{(\gamma+1)^2 -1}{g}>1
\]
provided
\begin{equation}
g+1<(\gamma+1)^2 \leq (g+1)^2,
\label{ggamcond}
\end{equation}
where the second inequality follows from $\gamma\leq g$, implied by (\ref{angnle}). We see that, in particular, if
$\gamma =g$; that is, $g_n = ga_n$, \eqref{ggamcond} is satisfied.
Also, $\lim_{\lambda\to \infty}\phi(\lambda) = 0$. On the other hand, $\psi(0) =1$ and
$\lim_{\lambda\to \infty}\psi(\lambda) =+\infty$. Since $\phi$ is decreasing and $\psi$ is increasing,
there is exactly one $\lambda_0>0$ for which (\ref{feco1ex0}) has a solution
(with arbitrary $f_1$ that can be set to 1). Moreover, we see that
\[
\sum\limits_{n=1}^{\infty} a_nn^m f_{n,\lambda} = g_1\sum\limits_{n=2}^\infty
\frac{a_n n^m}{\lambda + a_n+g_n}\prod\limits_{j=2}^{n-1}
\frac{g_j}{\lambda+g_j+a_j}\leq g_1\sum\limits_{n=2}^\infty{ n^m c^{n-2}} <\infty,
\]
and thus $f_{\lambda_0} = (f_{n,\lambda_0})_{n\in \mathbb N}$ is the Perron eigenvector of the generator $U_m$.
\end{example}

\begin{example}
The dominant eigenvalue $\lambda_0$ can be explicitly found in certain cases. Let us consider general problem (\ref{feco1}) with $g_n = r n, d_n = 0$ for all $n\in \mathbb N$  and  some $r>0$ and with other  coefficients satisfying the assumptions of Theorem \ref{thAEG}. Let $f_{\lambda} = (f_{n,\lambda})_{n\in \mathbb N}\in D(U_m)$ satisfy
\begin{align}
\lambda f_{1,\lambda}&= -rf_{1,\lambda} +
\sum\limits_{i=2}^\infty a_{i}b_{1,i}f_{i,\lambda},\nonumber\\
\lambda f_{n,\lambda}&= r(n-1)f_{n-1,\lambda}-(a_n+ rn)f_{n,\lambda}  +
\sum\limits_{i=n+1}^\infty a_{i}b_{n,i}f_{i,\lambda}, \quad n\geq 2.
\label{feco1ex0b}
\end{align}
Multiplying the $n$-th equation by $n$ and summing them, we obtain
\[
\lambda\sum\limits_{n=1}^\infty nf_{n,\lambda} = r\sum\limits_{n=1}^\infty nf_{n,\lambda}.
\]
The above is satisfied if either $\lambda = r$ or $\sum\limits_{n=1}^\infty nf_{n,\lambda}=0$. Since we know that the Perron eigenvector must be positive, we obtain  that $\lambda_0=r$ is the Perron eigenvalue. As a byproduct, we see that any eigenvector $f_\lambda$ belonging to an eigenvalue $\lambda\neq r$ must satisfy $\sum\limits_{n=1}^\infty nf_{n,\lambda}=0$.

To conclude, let us consider the transposed matrix
\[
\mathcal{U}^T = \left(\begin{array}{cccccc}-g_1&g_1&0&0&0&\ldots\\a_2b_{1,2}&-(g_2+a_2)&g_2&0&0&\ldots\\
a_3b_{1,3}&a_3b_{2,3}&-g_3+a_3&g_3&0&\ldots\\
\vdots&\vdots&\vdots&\vdots&\vdots&\vdots\\
a_nb_{1,n}&a_nb_{2,n}&\ldots&-(g_n+a_n)&g_n&\ldots\\
\vdots&\vdots&\vdots&\vdots&\vdots&\vdots
\end{array}
\right).
\]
Let $U_m^*$ be the adjoint to $U_m$ acting in $X_m^* =\{(v_n)_{n\in\mathbb{N}}\;; \sup\limits_{n\in \mathbb N}n^{-m}|v_n|<\infty\}$ and let $f^* \in D(U^*_m)$. Then, by definition
$$
\langle U^*_mf^*, f\rangle = \langle f^*, U_mf\rangle, \quad f \in D(U_m).
$$
Taking $f = (\delta_{n,N})_{n\in \mathbb N}$, we see that
$$
[U^*_mf^*]_N = \langle f^*,U_m f\rangle = \sum\limits_{n=1}^{N-1} f^*_n a_N b_{n, N} - (g_N+a_N)f^*_N + g_Nf^*_{N+1} = [\mathcal U^T f^*]_{N}
$$
hence $U^*_m$ is a restriction of $\mathcal U^T$ to $D(U^*_m) \subset D(U^*_{m,\max}) = \{f \in X_m^*\ :\ \mathcal U^Tf \in X_m^*\}$. On the other hand, let $f^* \in D(U^*_{m,\max})$, $f \in D(U_m)$. Then, since $D(U_m)$ is a weighted $l^1$ space, $\bigcup_{N=1}^\infty P_ND(U_m)$, where $P_N$ is the projection defined in \eqref{proj}, is a core for $U_m$. Using the fact that $U_mP_N D(U_m)$ is finite dimensional, for each $N$
$$
\langle \mathcal U^Tf^*, P_Nf\rangle = \langle f^*, U_mP_Nf\rangle = \langle U^*_mf^*, P_Nf\rangle
$$
and hence, passing to the limit with $N\to \infty$,  $f^*\in D(U^*_m)$. Thus $U^*_m = \mathcal U^T$ with $D(U^*_m) = D(U^*_{m,\max})$.

Using the assumption that $g_n = rn,$ we see that $h = (1,2,\ldots,n,\ldots)\in D(U_m^*)$ for any
$m\geq 1$ and
\[
U^*_m h = rh.
\]
Thus, by Theorem \ref{thAEG},
\[
U_m(t)f^{in} = e^{rt} \left(\sum\limits_{n=1}^{\infty}nf^{in}_n\right) e + O(e^{r' t})
\]
for some $r'<r$, where $e$ is the Perron eigenvector with unit mass; that is $e = f_{\lambda_0}/\sum_{n=1}^\infty nf_{n,\lambda_0}$.
\end{example}

To illustrate the formulas derived in the last two examples, we let 
$m=2$, $r=1$, $a_n = 2n$, $f^{in}_n = \delta_{n,10}10$ and
integrate \eqref{feco1ex} numerically in the time interval $t\in[0,20]$. As evident from Fig.~\ref{fig1},
the solution $f(t)$ very quickly settles to its asymptotic limit $\langle h, f^{in}\rangle e$ (see the top-right diagram),
while in complete agreement with Theorem~\ref{thAEG}, the deviation
$\|e^{-rt}G_{U_m}(t)f^{in} - \langle h, f^{in}\rangle e\|_{[m]}$ decreases exponentially as $t$ increases
(see the bottom-left diagram).

\begin{figure}[ht!]
\includegraphics[scale=0.85]{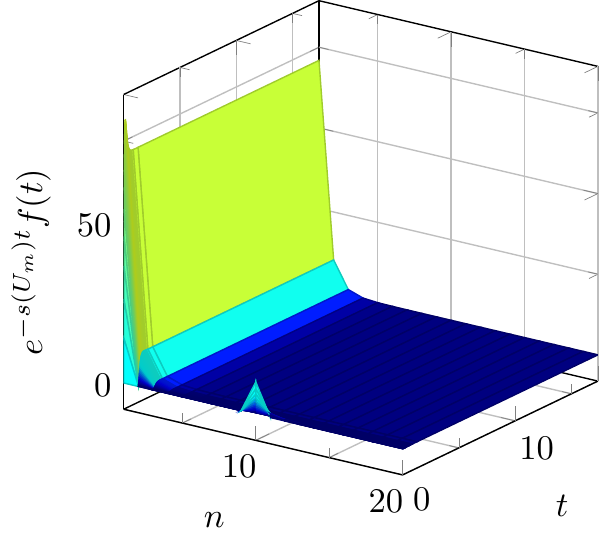}\;\;\; \includegraphics[scale=0.85]{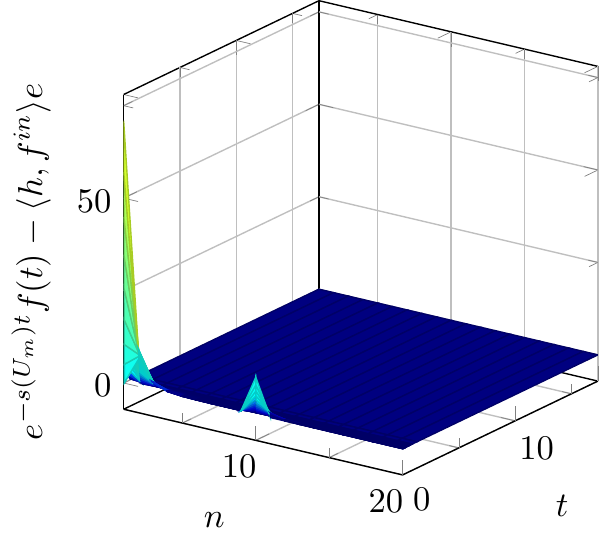} \\
\includegraphics[scale=0.85]{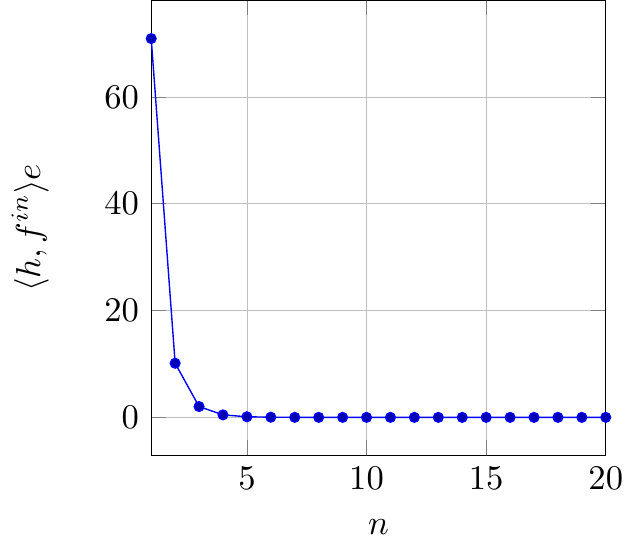}\;\;\; \includegraphics[scale=0.85]{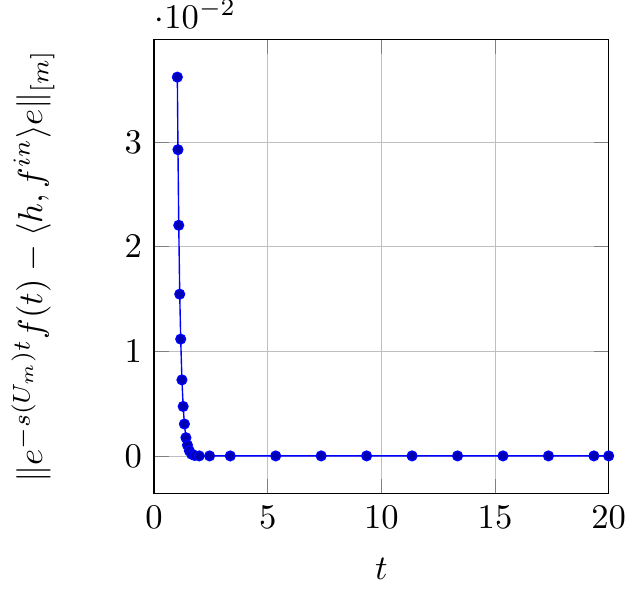}
\caption{The long time behavior of \eqref{feco1ex}.  The semigroup solution
$f(t) = G_{U_m}(t)f^{in}$ of \eqref{feco1ex} (top-left);
the asymptotic error $e^{-s(U_m)t}f(t) - \langle h, f^{in}\rangle e$ (top-right);
the asymptotic mass distribution $\langle h, f^{in}\rangle e$ (bottom-left)
and the evolution of the asymptotic error
$\|e^{-s(U_m)t}f(t) - \langle h, f^{in}\rangle e\|_{[m]}$, for $t\ge 1$ (bottom-right).
}\label{fig1}
\end{figure}

A crucial role in the analysis is played by (\ref{crucrit}). It ensures that most of the mass of the daughter particles
is concentrated in smaller particles, \cite{Bana12b}.  A large class of fragmentation kernels, that can be considered
to be a discrete equivalent of the homogeneous kernels in continuous fragmentation,  satisfying (\ref{crucrit}) is
presented in the next example.

\begin{example}\label{ex2}
Assume that $b_{k,n}$ can be written as
\begin{equation}
b_{k,n} = \zeta(n) h\left(\frac{k}{n}\right), \quad 1\leq k\leq n-1,\quad  n \in \mathbb N,
\label{nh}
\end{equation}
where $h$ is a Riemann integrable function on $[0,1]$ and $\zeta(n)$ is an appropriate sequence
that ensures that (\ref{bin}) is satisfied. By (\ref{bin}), we have
\[
1 = \zeta(n)(n-1) \sum\limits_{k=1}^{n-1} \frac{k}{n}h\left(\frac{k}{n}\right)\frac{1}{n-1}.
\]
Since
\[
\frac{k-1}{n-1}\leq \frac{k}{n}\leq \frac{k}{n-1}
\]
for $1\leq k\leq n$, we have
\[
\lim\limits_{n\to \infty}\sum\limits_{k=1}^n \frac{k}{n}h\left(\frac{k}{n}\right)\frac{1}{n-1} = \int_{0}^{1}zh(z)dz
\]
and thus
\[
\lim\limits_{n\to \infty}(n-1)\zeta(n) = \frac{1}{\int_{0}^{1}zh(z)dz}.
\]
Therefore
\begin{align*}
\lim\limits_{n\to \infty}\sum\limits_{k=1}^{n-1}  \left(\frac{k}{n}\right)^pb_{k,n}
&=  \lim\limits_{n\to \infty}\zeta(n)(n-1)\sum\limits_{k=1}^{n-1}  \left(\frac{k}{n}\right)^ph
\left(\frac{k}{n}\right)\frac{1}{n-1}\\
&=  \frac{\int_{0}^{1}z^ph(z)dz}{\int_{0}^{1}zh(z)dz}<1.
\end{align*}
Thus
\[
\liminf\limits_{n\to \infty}\frac{\Delta_n^{(p)}}{n^p}
= \lim\limits_{n\to \infty}\frac{n^p-\sum\limits_{k=1}^{n-1}k^pb_{k,n}}{n^p} >0
\]
and hence (\ref{crucrit}) is satisfied.

We note that (\ref{nh}) is obviously satisfied by the binary uniform fragmentation
\[
b_{n,i} = \frac{2}{i-1}, \quad n=1,\ldots,i-1.
\]
Another example is offered by the binary fragmentation written in terms of a symmetric infinite matrix
$(\psi_{i,j})_{i,j\geq 1}$ as
\[
\frac{d f_n}{dt} = -\frac{1}{2}f_n\sum\limits_{i=1}^{n-1}\psi_{i,n-i}
+\sum\limits_{i=n+1}^{\infty}\psi_{n,i-n}u_{i},\quad n\geq 1,
\]
see \cite{BaCa90, daCo95, ZM1}. Translating into our notation, we get
\[
b_{n,i} = \frac{\psi_{n,i-n}}{a_i},\quad
a_n = \frac{1}{2}\sum\limits_{i=1}^{n-1}\psi_{i,n-i}, \quad i\geq 2,\quad 1\leq n\leq i-1.
\]
Typical cases in the polymer degradation are
\begin{align*}
\psi_{i,j} &= (i+j)^\beta,\\
\psi_{i,j} &= (ij)^\beta.
\end{align*}
The first case gives $a_n = \frac{1}{2}n^\beta(n-1)$ and $b_{n,i} = \frac{2}{i-1}$ and hence it is a
uniform binary fragmentation (see the long time behavior of $G_{U_m}(t)f^{in}$, with
$m=2$, $\beta=\frac{1}{10}$, $g_n = d_n = n^{1+\beta}$ and $f_n^{in} = \delta_{10,n}10$, in Fig.~\ref{fig2}).
In the second case, we have
\[
b_{n,i} = \frac{n^\beta (i-n)^\beta}{a_i}
= \frac{i^{2\beta}}{a_i} \left(\frac{n}{i}\right)^\beta\left(1-\frac{n}{i}\right)^\beta
\]
and (\ref{nh}) is satisfied with
\[
\zeta(n) =  \frac{n^{2\beta}}{a_n}\quad\text{and}\quad h(z) = z^\beta (1-z)^\beta.
\]
(the typical qualitative behavior of $G_{U_m}(t)f^{in}$, with
$m=2$, $\beta=\frac{1}{10}$, $d_n = g_n = n^{1+\beta}$ and $f_n^{in} = \delta_{10,n}10$, is shown in Fig.~\ref{fig3}).
\end{example}
\begin{example}
On the other hand, the  fragmentation process given by
\begin{align}
&b_{1,2} = 2, \quad\text{and}\quad b_{1,i}=b_{i-1,i} =1, \nonumber\\
&b_{n,i}=0, \quad i\geq 2,\quad 2\leq n\leq i-2,
\label{bbad}
\end{align}
obviously does not satisfy (\ref{crucrit}) and, in fact, the corresponding semigroup is neither analytic,
nor compact, see \cite{Bana12b}.
\end{example}

\begin{figure}[ht!]
\includegraphics[scale=0.85]{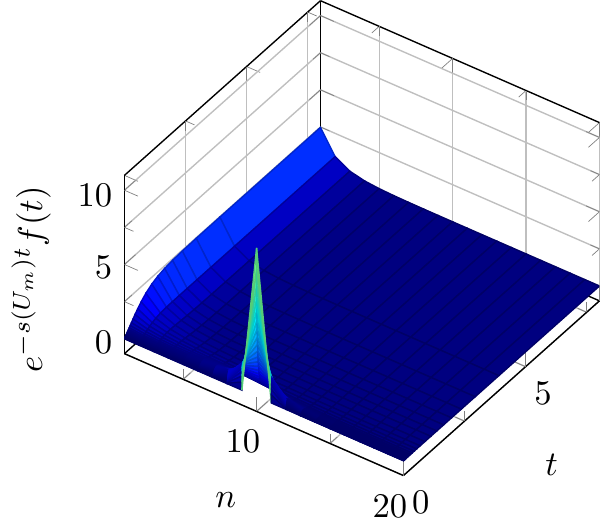}\;\;\; \includegraphics[scale=0.85]{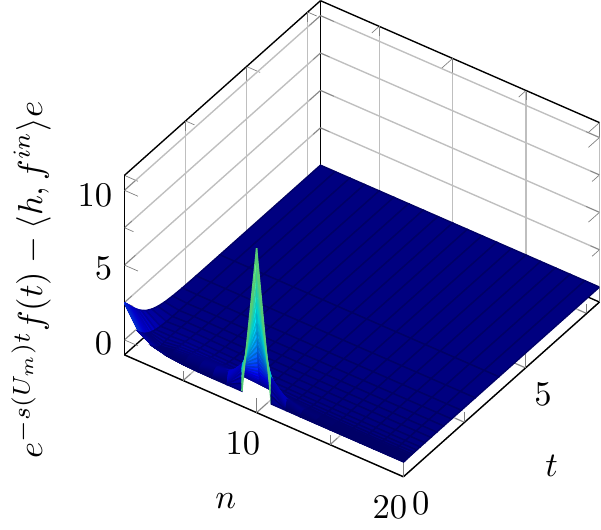} \\
\includegraphics[scale=0.85]{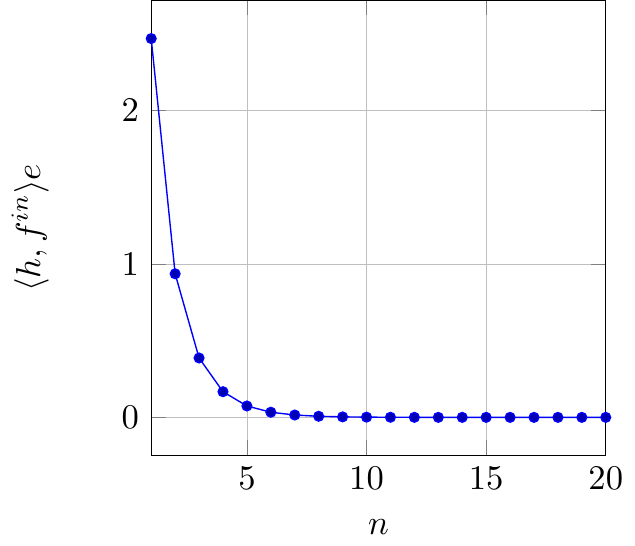}\;\;\; \includegraphics[scale=0.85]{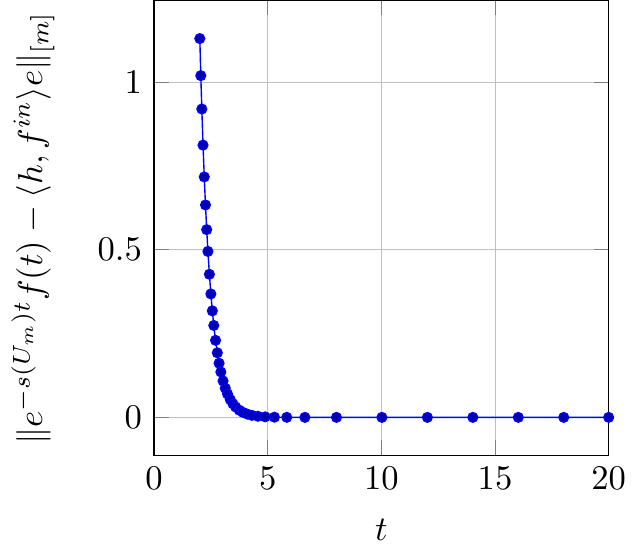}
\caption{The long time behavior of \eqref{feco1}, $\psi_{i,j} = (i+j)^\beta$.  The semigroup solution
$f(t) = G_{U_m}(t)f^{in}$ of \eqref{feco1} (top-left);
the asymptotic error $e^{-s(U_m)t}f(t) - \langle h, f^{in}\rangle e$ (top-right);
the asymptotic mass distribution $\langle h, f^{in}\rangle e$ (bottom-left)
and the evolution of the asymptotic error
$\|e^{-s(U_m)t}f(t) - \langle h, f^{in}\rangle e\|_{[m]}$, for $t\ge 1$ (bottom-right).
}\label{fig2}
\end{figure}

\begin{figure}[ht!]
\includegraphics[scale=0.85]{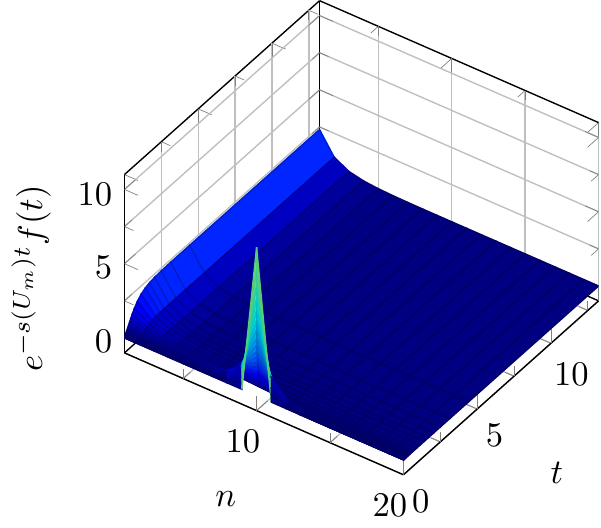}\;\;\; \includegraphics[scale=0.85]{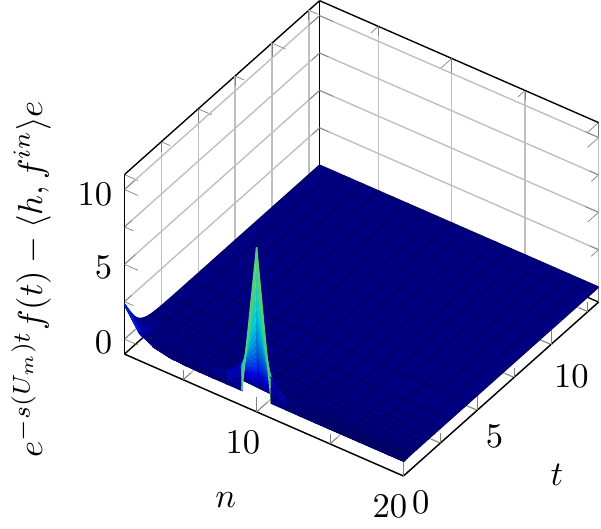} \\
\includegraphics[scale=0.85]{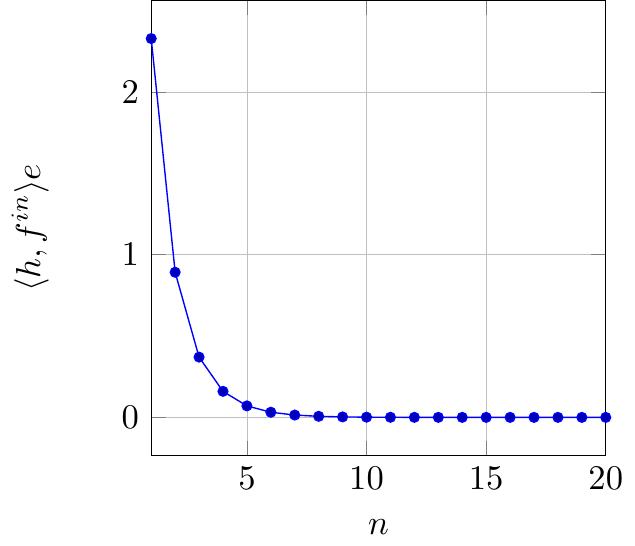}\;\;\; \includegraphics[scale=0.85]{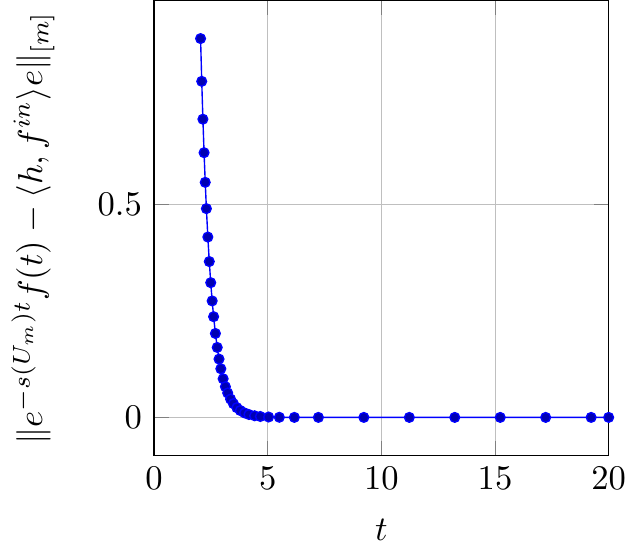}
\caption{The long time behavior of \eqref{feco1}, $\psi_{i,j} = (ij)^\beta$.  The semigroup solution
$f(t) = G_{U_m}(t)f^{in}$ of \eqref{feco1} (top-left);
the asymptotic error $e^{-s(U_m)t}f(t) - \langle h, f^{in}\rangle e$ (top-right);
the asymptotic mass distribution $\langle h, f^{in}\rangle e$ (bottom-left)
and the evolution of the asymptotic error
$\|e^{-s(U_m)t}f(t) - \langle h, f^{in}\rangle e\|_{[m]}$, for $t\ge 1$ (bottom-right).
}\label{fig3}
\end{figure}

\section{Appendix: an alternative view at the model}\label{sec7}
In Theorem \ref{mth1}, we have seen a regularizing role played by the diagonal operator induced by $\mathcal{A}$
even in the case not involving the full fragmentation operator. In many applications, however, (\ref{feco1}) models
a combination of two independent processes -- the birth-and-death process and the fragmentation process and it
is important to investigate when they exist irrespective of each other. In other words, we consider (\ref{feco1}) as
\begin{equation}
\frac{d}{dt}f= \mathcal{G}f+ \mathcal{D}f+ \mathcal{F}f,\qquad
 f(0) = f^{in}.
\label{fecof3}
\end{equation}
The pure birth-and-death problem
\begin{equation}
\frac{d}{dt}f= \mathcal{V}f =\mathcal{G}f+ \mathcal{D}f,\qquad
 f(0) = f^{in}
\label{fecof4}
\end{equation}
has been extensively analysed in the space $X_{0},$ see e.g. \cite[Chapter 7]{BaAr}. Its behaviour in $X_{m}$
creates, however, unexpected challenges. First, we observe
\begin{example}
If there is $C$ such that
\begin{equation}
g_n \leq Cn, \quad n\geq 1,
\label{ass1agrowth}
\end{equation}
then there is a realization of the growth expression $\mathcal{G}$ that generates a $C_0$-semigroup in
$X_{m}$. Indeed, this again follows from the Kato--Voigt theorem. We consider $\mathcal{G}$ as the perturbation of
$\mathcal{G}^0$ by $\mathcal{G}^-$; that is, we introduce $G^0_m = \mathcal{G}^0|_{D(G^0_m)}$, with
\[
D(G^0_m) = \{f \in X_m;\; \mathcal G^0 f \in X_m\}.
\]
Then, as in (\ref{1est}), for $f \in D(G_m^0)$,
\begin{equation}
\sum\limits_{n=1}^\infty n^m[(G^0_m +G^{-}_m)f]_n
= \sum\limits_{n=0}^{\infty} n^m f_n \left(g_n \frac{(n+1)^m-n^m}{n^m} \right)\leq C'\|f\|_{[m]},
\label{1estg}
\end{equation}
for some constant $C'$. Hence, there is an extension of $G^0_m+G^-_m$ generating a $C_0$-semigroup in
$X_{m}$. On the other hand, if for some $c,C>0$
\begin{equation}
cn^q\leq g_n \leq Cn^q, \quad n\geq 1, q>1,
\label{ass1agrowth'}
\end{equation}
then there is no realisation of $\mathcal{G}$ with resolvent bounded in $X_{m}$ with $q \leq  m+1$.
Indeed, the resolvent of the generator, if it exists, must be given by (\ref{lgf5}),
\begin{equation}
\label{lgf5g}
[R_\lambda f]_n = \sum_{i=1}^{n} \frac{f_i}{\lambda + g_n}
\prod_{j=i}^{n-1} \frac{g_{j}}{\lambda + g_{j}}, \quad n\ge 1.
\end{equation}
Let us fix $\lambda$. Then
\[
\prod_{j=i}^{n-1} \frac{g_{j}}{\lambda + g_{j}}\geq g_\lambda
:= \prod_{j=1}^{\infty} \frac{g_{j}}{\lambda + g_{j}},
\]
where $g_\lambda\neq 0$, and, for $f \in X_{m,+}$,
\begin{align*}
\| R_\lambda f \|_{[m]} &= \sum_{n=1}^{\infty} n^m \sum_{i=1}^{n} \frac{f_i}{\lambda + g_n}
\prod_{j=i}^{n-1} \frac{g_{j}}{\lambda + g_{j}}
=\sum_{i=1}^{\infty} f_{i} \sum_{n=i}^{\infty}  \frac{n^m}{\lambda + g_n}
\prod_{j=i}^{n-1} \frac{g_{j}}{\lambda + g_{j}}\\
&\geq   g_\lambda \sum_{i=1}^{\infty} f_{i} \sum_{n=i}^{\infty} \frac{n^m}{(\lambda + g_n)}
\geq g_\lambda C^{-1}\sum_{i=1}^{\infty} f_{i} \sum_{n=i}^{\infty} \frac{1}{n^{q-m}}.
\end{align*}
Hence $R_\lambda$ is not bounded if (\ref{ass1agrowth'}) is  satisfied and hence, in particular, there is no realisation
of $\mathcal{G}$ generating a $C_0$-semigroup in $X_m$. We note that for $q = 2$ and $m=1$ we have a
discrete version of the nonexistence result obtained in \cite[Remark 2]{BOR09}.
\end{example}
Let us return to the full birth-and-death model (\ref{fecof3}). As before, we introduce
$V^0_m + V^1_m: =G_m^0+D_m^0 + G_m^-+D_m^+$ on
\[
D(V^0_m)=\{f \in X_m;\; (\mathcal{G}^0+\mathcal{D}^0)f \in X_m\}.
\]
We have
\begin{theorem}\label{theorem3}
\begin{enumerate}
\item If
\begin{equation}
\limsup\limits_{n\to \infty}\Gamma_n \leq C
\label{bdp1}
\end{equation}
for some constant $C\in \mathbb R$, where
\[
\Gamma_n = {g_n}\left(\left(1+\frac{1}{n}\right)^m-1\right)-{d_n}\left(1-\left(1-\frac{1}{n}\right)^m\right),
\]
then there is an extension $ {V}_m$ of $V^0_m+V^1_m$ that generates a quasicontractive semigroup
$(G_{{V}_m}(t))_{t\ge0}$ on $X_m$.
\item Condition (\ref{bdp1}) is satisfied if either
\subitem a) (\ref{ass1agrowth}) is satisfied, or
\subitem b) $\limsup\limits_{n\to \infty} \dfrac{d_n}{g_n}\geq 1$ and $d_n=O(n^{2})$, or
\subitem c) $\dfrac{d_n}{g_n}\geq 1 + \dfrac{m'-1}{n}$ for sufficiently large $n$ and $m'>m$.
\item If any of the conditions of point 2. is satisfied, then $ {V}_m = \overline{V_m^0+V_m^1}$.
\end{enumerate}
\end{theorem}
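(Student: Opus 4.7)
The plan is to mirror the strategy of Theorem \ref{mth1}.1--2, now viewing $V_m^1 = G_m^- + D_m^+$ as a positive off-diagonal perturbation of the diagonal dissipative operator $V_m^0 = G_m^0 + D_m^0$. Two tools from \cite{BaAr} drive the argument: Proposition 9.29 for the existence of the extension generating a quasicontractive positive semigroup, and Corollary 6.14 to upgrade this extension to the closure of the natural operator via an honesty argument.

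For Part 1, I take $f \in D(V_m^0)_+$ and re-index the shift terms as in the derivation of (\ref{1est}), obtaining the finite identity
\begin{equation*}
\sum_{n=1}^N n^m [(V_m^0+V_m^1)f]_n = \sum_{n=1}^N n^m f_n \Gamma_n - (N+1)^m g_N f_N + N^m d_{N+1} f_{N+1}.
\end{equation*}
Using (\ref{bdp1}), I fix $n_0$ with $\Gamma_n \le C$ for $n \ge n_0$; dropping the nonnegative boundary term $N^m d_{N+1} f_{N+1}$ and passing to $N \to \infty$, the series $\sum_n n^m[(V_m^0+V_m^1)f]_n$ decomposes as $c_0(f) - c_1(f)$ with $c_0$ a bounded functional on $X_m$ (contributions from $n < n_0$ together with the $C\|f\|_{[m]}$ tail bound) and $c_1 \ge 0$. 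Proposition 9.29 of \cite{BaAr} then produces the desired extension ${V}_m \supset V_m^0 + V_m^1$ generating a quasicontractive positive semigroup.

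Part 2 is a direct asymptotic verification. Expanding $(1 \pm 1/n)^m$ gives
\begin{equation*}
\Gamma_n = \frac{m(g_n - d_n)}{n} + \frac{m(m-1)(g_n+d_n)}{2 n^2} + O\left(\frac{g_n+d_n}{n^3}\right).
\end{equation*}
In case (a) the death contribution to $\Gamma_n$ is nonpositive and $g_n((1+1/n)^m-1) = O(1)$ by $g_n \le Cn$, so $\Gamma_n$ is bounded above directly without any asymptotic cancellation. In case (b), $d_n = O(n^2)$ controls the $O((g_n+d_n)/n^2)$ remainder, while the ratio condition on $d_n/g_n$ forces the linear term $m(g_n-d_n)/n$ to be bounded above. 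In case (c), writing $d_n = g_n(1+\alpha_n)$ with $\alpha_n \ge (m'-1)/n$ turns the linear term into $-m g_n \alpha_n/n \le -m(m'-1) g_n/n^2$, which then dominates the remaining positive contributions of the same order.

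For Part 3, the aim is the identity ${V}_m = \overline{V_m^0 + V_m^1}$, i.e.\ that the Kato--Voigt extension coincides with the closure of the natural operator. By \cite[Corollary 6.14]{BaAr} this is equivalent to honesty of $(G_{{V}_m}(t))_{t\ge 0}$, which, in view of the finite identity in Part 1, reduces to
\begin{equation*}
\lim_{N\to\infty} \bigl((N+1)^m g_N f_N - N^m d_{N+1} f_{N+1}\bigr) = 0, \qquad f \in D({V}_m)_+.
\end{equation*}
Following Theorem \ref{mth1}.2, I would first show that $c_1$ extends monotonically to $D({V}_m)_+$, which forces the above limit to exist. The hard part is then to use each of (a), (b), (c) separately to obtain the extra summability of $(n^{m-1} g_n f_n)$ and $(n^{m-1} d_n f_n)$ needed to force that limit to be zero: unlike in Theorem \ref{mth1}.2 where (\ref{condi2}) directly yielded $D({K}_m) \subset D(A_m) \cap D(D_m^0)$, here the cancellation between the $g_N f_N$ and $d_{N+1} f_{N+1}$ boundary contributions must be broken case by case, exploiting the specific comparison of $g_n$ and $d_n$ built into (a)--(c). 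This case analysis is the main obstacle.
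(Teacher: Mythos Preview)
Your treatment of Parts 1 and 2 matches the paper's essentially line for line: the paper also applies Kato--Voigt after computing $\sum_n n^m[(V^0_m+V^1_m)f]_n=\sum_n n^mf_n\Gamma_n$ on $D(V^0_m)_+$, and then expands $(1\pm 1/n)^m$ to verify \eqref{bdp1} in the three cases.

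Part 3 is where you diverge, and there the gap is real. Your plan is to imitate Theorem~\ref{mth1}.2 by extracting, from the extended functional $c_1$, summability of $(n^{m-1}g_nf_n)$ and $(n^{m-1}d_nf_n)$ on $D(V_m)_+$, and then forcing each boundary term to vanish separately. This does go through in case (a), but in cases (b) and (c) it cannot: under (c) one has only $-\Gamma_n\ge c\,g_n/n^2$ (equivalently $c\,d_n/n^2$), so $c_1$ controls merely $\sum n^{m-2}g_nf_n$ and $\sum n^{m-2}d_nf_n$; a putative nonzero limit of $N^m g_Nf_N$ then yields only $n^{m-2}g_nf_n\gtrsim n^{-2}$, which is summable and contradicts nothing. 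In case (b) the situation is worse, since $\Gamma_n$ can remain nonnegative (take $d_n=g_n$), so $c_1$ carries no extra summability at all. Your own remark that ``this case analysis is the main obstacle'' is accurate: the obstacle is not merely technical but structural.

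The paper's argument for Part 3 does \emph{not} attempt to decouple the two boundary contributions. Following the extension technique of \cite[Theorems 6.22 and 7.11]{BaAr}, it only needs the combined limit $\lim_{n\to\infty}(-g_nf_n+d_{n+1}f_{n+1})n^m$ to be $\ge 0$ (the Kato--Voigt construction already gives $\le 0$). Assuming, for contradiction, that this limit is $\le -b<0$, one rewrites the inequality as a one-step recursion
\[
f_n \ge \frac{b}{n^m g_n} + \frac{d_{n+1}}{g_n}\,f_{n+1},
\]
iterates it to obtain
\[
f_n \ge \frac{b}{g_n}\sum_{i=0}^{\infty}\frac{1}{(n+i)^m}\prod_{j=1}^{i}\frac{d_{n+j}}{g_{n+j}},
\]
and then checks, under each of (a), (b), (c), that $\sum_n n^m$ times this lower bound diverges, contradicting $f\in X_m$. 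This iterated lower bound is the missing idea; it handles the tridiagonal birth--and--death structure uniformly, whereas the summability route inherited from Theorem~\ref{mth1}.2 is tailored to the one-sided (pure growth) boundary term there.
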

\begin{proof}
Statement 1. of the theorem follows in a standard way as an application of the Kato--Voigt theorem. For
$f \in D(V^0_m)_+$ we have
\begin{align*}
&\sum\limits_{n=1}^\infty n^m[(G^0_m+D^0_m+ G^-_m + D^+_m)f]_n \\
&=  \sum\limits_{n=1}^\infty n^m f_n\left(\left(\left(1+\frac{1}{n}\right)^m-1\right)g_n
-d_n\left(1-\left(1-\frac{1}{n}\right)^m\right)\right)=\sum\limits_{n=1}^\infty  n^m f_n\Gamma_n.
\end{align*}
For statement 2. we observe that
\begin{align*}
\left(1+\frac{1}{n}\right)^m-1 &= \frac{m}{n} + \frac{m(m-1)}{2n^2} + O\left(\frac{1}{n^3}\right), \\
 1-\left(1-\frac{1}{n}\right)^m &= \frac{m}{n} - \frac{m(m-1)}{2n^2} + O\left(\frac{1}{n^3}\right).
\end{align*}
Thus, if 2a) is satisfied, then the positive part of $\Gamma_n$ is bounded. If 2b) is satisfied, then
\[
\Gamma_n \leq d_n\left(\frac{m(m-1)}{n^2} + O\left(\frac{1}{n^3}\right)\right)
\]
for sufficiently large $n$ and hence $\Gamma_n$ is bounded from above. Finally, if 2c) is satisfied, then
\begin{align*}
\Gamma_n &\leq g_n\left(\left(\frac{m(m-1)}{n^2} + O\left(\frac{1}{n^3}\right)\right)
- \frac{m'-1}{n}\left(\frac{m}{n} - \frac{m(m-1)}{2n^2} + O\left(\frac{1}{n^3}\right)\right)\right)\\
&= \frac{g_n}{n^2}\left(m(m-m') + O\left(\frac{1}{n}\right)\right)
\end{align*}
and hence $\Gamma_n$ is negative for large $n$ and thus also bounded from above.

To prove the last statement, we use the approach of \cite[Theorem 7.11]{BaAr}, based on the extension technique,
see \cite[Theorem 6.22]{BaAr}. Let $f \in D({V}_m)_+$. Then
\begin{align}
&\sum_{n=1}^{\infty}n^m(-(g_n+d_n)f_n + g_{n-1}f_{n-1} +
d_{n+1}f_{n+1})\nonumber\\
&=\phantom{xx}\sum\limits_{k=1}^\infty  k^m f_k\Gamma_k+ \lim\limits_{n\to \infty}
(-g_nf_n + d_{n+1}f_{n+1})n^m,
\label{arlbd}
\end{align}
where the limit exists. For honesty, it suffices to prove that for any $f \in D({V}_m)_+$
\[
\lim\limits_{n\to \infty} (-g_nf_n + d_{n+1}f_{n+1})n^m \geq 0.
\]
Assume, to the contrary, that for some $0\leq {f} \in D({V}_m)_+$, the limit is negative so that
there exists $b>0$ such that
\begin{equation}
(-g_nf_n + d_{n+1}f_{n+1})n^m \leq -b,
\label{bn}
\end{equation}
for all $n\geq n_0$ with large enough $n_0$. Thus, for $n\geq n_0$ we have
\[
f_n \geq \frac{b}{n^mg_n} + \frac{d_{n+1}}{g_n}f_{n+1}
\]
and, by induction, for arbitrary  $k$
\[
f_{n} \geq  \frac{b}{g_n}\left(
\sum_{i=0}^{k}\frac{1}{(n+i)^m}\prod\limits_{j=1}^{i}\frac{d_{n+j}}{g_{n+j}}\right).
\]
Because $k$ is arbitrary, we obtain
\[
f_{n} \geq \frac{b}{g_n}\left(
\sum_{i=0}^{\infty}\frac{1}{(n+i)^m}\prod\limits_{j=1}^{i}\frac{d_{n+j}}{g_{n+j}}\right),\quad n\geq n_0.
\]
Thus, if
\begin{equation}
\sum_{n=1}^{\infty}\frac{n^m}{g_n}\left(
\sum_{i=0}^{\infty}\frac{1}{(n+i)^m}\prod\limits_{j=1}^{i}\frac{d_{n+j}}{g_{n+j}}\right)=+\infty
\label{as2}
\end{equation}
(where we put  $\prod_{j=1}^{0} \cdot =1$) is satisfied, then
$\sum_{n=0}^{\infty}n^mf_n = +\infty$ which contradicts $f \in D(V_m)_+$.

Now, if (\ref{ass1agrowth}) is satisfied, we have
\[
\sum_{n=1}^{\infty}\frac{n^m}{g_n}\left(
\sum_{i=0}^{\infty}\frac{1}{(n+i)^m}\prod\limits_{j=1}^{i}\frac{d_{n+j}}{g_{n+j}}\right)
\geq \sum_{n=1}^{\infty}\frac{1}{g_n} =+\infty.
\]
Similarly, if assumption 2.b) is satisfied, we have
\[
\sum_{n=1}^{\infty}\frac{n^m}{g_n}\left(
\sum_{i=0}^{\infty}\frac{1}{(n+i)^m}\prod\limits_{j=1}^{i}\frac{d_{n+j}}{g_{n+j}}\right)
\geq \sum_{n=1}^{\infty}\frac{n^m}{g_n}\left(
\sum_{i=n}^{\infty}\frac{1}{i^m}\right) \geq C\sum_{n=1}^{\infty}\frac{1}{n} =+\infty,
\]
where we used the integral estimate for the inner sum. Finally, if 2.c) is satisfied, we can write
\[
\sum_{n=1}^{\infty}\frac{n^m}{g_n}\left(
\sum_{i=0}^{\infty}\frac{1}{(n+i)^m}\prod\limits_{j=1}^{i}\frac{d_{n+j}}{g_{n+j}}\right)
\geq \sum_{n=1}^{\infty}\frac{n^m}{g_n}\left(
\sum_{i=0}^{\infty}\frac{1}{(n+i)^m}\prod\limits_{j=1}^{i}\left(1+\frac{m'-1}{n+j}\right)\right).
\]
Now, as in the proof of Theorem \ref{mth1},  by the Stirling formula,
\[
\prod\limits_{j=1}^{i}\left(1+\frac{m'-1}{n+j}\right)
= \frac{\Gamma (n+i+m') \Gamma (n+1)}{\Gamma(n+m')\Gamma(n+i+1)}
= O\left(\left(\frac{n+i+m'}{n+m'}\right)^{m'-1}\right)
\]
and we see that the inner series diverges if the second condition of  2.c) is satisfied.
\end{proof}

By \cite[Theorem 2.1]{Bana12b},  under standard assumptions on the fragmentation coefficients $F_m = \overline{A_m+B_m}$ generates a
quasicontractive  $(G_{{F}_m}(t))_{t\ge0}$.
\begin{theorem}
Assume the conditions of Theorem \ref{theorem2}, item 1. and of Theorem \ref{theorem3}, item 3. are satisfied.
Then $Y_m = \overline{V_m+F_m}$ and
\begin{equation}
G_{Y_m}(t)f = \lim\limits_{n\to \infty} \left(G_{  V_m}
\left(\frac{t}{n}\right)G_{F_m}\left(\frac{t}{n}\right)\right)^nf, \quad f \in X_m,\label{TKrep}
\end{equation}
uniformly on bounded time intervals. \label{TrotKat}
\end{theorem}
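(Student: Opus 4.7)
The plan is to exploit the algebraic identity between $Y_m$ and $V_m+F_m$ on the common core $D(T_m)$, and then to invoke a Trotter--Kato product formula for positive $C_0$-semigroups on the AL-space $X_m=\ell^1_m$. The two claims of the theorem are naturally coupled: once the equality of the generators is established, the product formula follows from the standard form of the Trotter--Kato theorem for such generators, while the product formula itself provides a tool for showing the equality.

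First I would pin down the algebraic relation. By Theorem~\ref{theorem3}.3, $V_m=\overline{V_m^0+V_m^1}$ with $D(V_m^0)=\{f\in X_m\,:\,(\mathcal G^0+\mathcal D^0)f\in X_m\}$ as a core, and by \cite[Theorem 2.1]{Bana12b}, $F_m=\overline{A_m+B_m}$ with $D(A_m)$ as a core. Since $D(T_m)=D(A_m+G_m^0+D_m^0)\subset D(V_m^0)\cap D(A_m)\subset D(V_m)\cap D(F_m)$, the operator $V_m+F_m$ restricted to $D(T_m)$ is well defined and, decomposing $V_m=G_m^0+D_m^0+G_m^-+D_m^+$ and $F_m=A_m+B_m$ on the respective cores, coincides with $T_m+G_m^-+D_m^++B_m$. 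By Theorem~\ref{theorem2}.1, this latter operator has closure $Y_m$, and $D(T_m)$ is a core for $Y_m$. Passing to closures yields $Y_m\subset\overline{V_m+F_m}$.

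Next I would apply the Trotter--Kato product formula. After the uniform shift $-\omega I$ with $\omega$ chosen large enough (e.g. $\omega$ dominating the types of both semigroups), $V_m-\omega I$ and $F_m-\omega I$ generate positive contractive $C_0$-semigroups on the AL-space $X_m$, and by the step above $Y_m-2\omega I\subset\overline{(V_m-\omega I)+(F_m-\omega I)}$. Because $Y_m-2\omega I$ is itself the generator of a contractive positive $C_0$-semigroup and $D(T_m)$ is a core for it contained in $D(V_m)\cap D(F_m)$, Kato's Trotter--Kato product theorem for positive semigroups on $L^1$-type spaces (see e.g. \cite[Chapter 7]{BaAr} and the references therein) applies: the symmetric products $(G_{V_m-\omega I}(t/n)G_{F_m-\omega I}(t/n))^n$ converge strongly, uniformly on bounded $t$-intervals, to a positive $C_0$-semigroup whose generator extends the closure of the sum. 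Stripping the shift and invoking uniqueness of the generator (a $C_0$-generator has no proper closed extension that is also a $C_0$-generator, so from $Y_m\subset\overline{V_m+F_m}$ and both being generators we conclude $Y_m=\overline{V_m+F_m}$) gives both the identification of generators and the formula \eqref{TKrep}.

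The main obstacle is the rigorous invocation of the Trotter--Kato product formula in this quasicontractive, non-self-adjoint, positive setting, and in particular the verification that $D(V_m)\cap D(F_m)$ contains a core for $\overline{V_m+F_m}$---which I address by pointing to the inclusion $D(T_m)\subset D(V_m)\cap D(F_m)$ and the core property of $D(T_m)$ for $Y_m$. A secondary bookkeeping issue is managing the shifts so that the contractivity hypotheses of Kato's theorem are genuinely satisfied; this is routine provided one replaces every generator by its shift by a common $\omega$. Once these points are in place, the uniform-in-$t$ convergence on compact intervals is the standard output of the product formula.
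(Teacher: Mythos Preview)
Your proposal is correct and follows essentially the same route as the paper: show $(V_m+F_m)|_{D(T_m)}=T_m+G_m^-+D_m^++B_m$ so that $Y_m\subset\overline{V_m+F_m}$, verify the Trotter--Kato hypotheses (the paper spells out the range condition---density of $(\lambda I-(V_m+F_m))D(T_m)$---explicitly, whereas you package it into the core property of $D(T_m)$ for the generator $Y_m$, which is equivalent), and conclude equality from the fact that a $C_0$-generator has no proper $C_0$-generator extension. Two cosmetic points: the paper invokes \cite[Corollary~3.5.5]{Pa} rather than a positivity-specific product formula (your pointer to \cite[Chapter~7]{BaAr} is not where such a result lives), and your phrase ``whose generator extends the closure of the sum'' is backwards---under the range condition the limit generator \emph{is} $\overline{V_m+F_m}$.
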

\begin{proof}
First, we observe that $D(V_m)\cap D( F_m) \supset D(G^0_m)\cap D(D^0_m)\cap D(A^0_m)$
and the latter is dense in $X_m$. Next, we see that
\begin{align*}
[\lambda I - ( V_m +  F_m)]D( V_m)\cap D(A_m)&
\supset [\lambda I- (V_m + F_m)]D(G^0_m)\cap D(D^0_m)\cap D( A_m)\\
& = [\lambda I- (T_m+ G_m^-+D^+_m +B_m) ]D(T_m).
\end{align*}
Since $\overline{(T_m+ G_m^-+D^+_m +B_m, D(T_m))}$ is the generator a semigroup, $[\lambda I- (T_m+ G_m^-+D^+_m +B_m) ]D(T_m)$ is dense in $X_m$ for sufficiently large $\lambda$. Indeed, if $f \in X_m$, then $f = (\lambda I- \overline{T_m+ G_m^-+D^+_m +B_m})u$ for some $u \in D(\overline{T_m+ G_m^-+D^+_m +B_m})$ and $u = \lim_{n\to \infty} u_n$ with $u_n \in D(T_m)$ and $\lim_{n\to \infty}(T_m+ G_m^-+D^+_m +B_m)u_n = \overline{T_m+ G_m^-+D^+_m +B_m}u$. But then $f = \lim_{n\to \infty} (\lambda u_n - (T_m+ G_m^-+D^+_m +B_m)u_n)$; that is, $f \in \overline{[\lambda I- (T_m+ G_m^-+D^+_m +B_m) ]D(T_m)}$.

Since both $(G_{ V_m}(t))_{t\ge0}$ and
$(G_{F_m}(t))_{t\ge0}$ are quasicontractive,  \cite[Corollary 3.5.5]{Pa} implies that
$\overline{ V_m+F_m}$ is the generator of a quasicontractive semigroup. Now
\begin{align*}
\lambda I - Y_m
&= \lambda I -\overline {T_m+ G_m^-+D^+_m +B_m} = \lambda I -\overline {(D^0_m+D^+_m+ G_m^-+G^0_m)+(A_m +B_m)}\\
&\subset \lambda I- \overline{ V_m+F_m}
\end{align*}
and, since both $Y_m$ and $\overline{ V_m+F_m}$ are generators, we must have
$Y_m =\overline{ V_m+F_m}$.
Then (\ref{TKrep}) follows from \cite[Corollary 3.5.5]{Pa}.
\end{proof}

\bibliographystyle{plain}
\bibliography{BLL_Bk}

\end{document}